\DeclarePairedDelimiter\ceil{\lceil}{\rceil}
\DeclarePairedDelimiter\floor{\lfloor}{\rfloor}
\theoremstyle{definition}
\newtheorem{theorem}{Theorem}[section]
\newtheorem{prop}[theorem]{Proposition}
\newtheorem{lemma}[theorem]{Lemma}
\newtheorem{cor}[theorem]{Corollary}
\newtheorem{defn}[theorem]{Definition}
\newtheorem{rmk}[theorem]{Remark}
\newtheorem{thm}[theorem]{Theorem}
\newtheorem{notation}[theorem]{Notation}
\newenvironment{myeq}[1][]
{\stepcounter{theorem}\begin{equation}\tag{\thetheorem}{#1}}
{\end{equation}}
\newcommand{\C}{{\mathbb C}}
\newcommand{\Hyp}{{\mathbb H}}
\newcommand{\Z}{{\mathbb{Z}}}
\newcommand{\R}{{\mathbb R}}
\newcommand{\rls}{{\mathbb R}}
\newcommand{\K}{\mathbb K}
\newcommand\DD{{\mathcal D}}
\newcommand\FF{{\mathcal F}}
\newcommand\LL{{\mathcal L}}
\newcommand\MM{{\mathcal M}}
\newcommand\PP{{\mathcal P}}
\newcommand\PMF{{\PP\kern-2pt\MM\FF}}
\newcommand\PML{{\PP\kern-2pt\MM\LL}}
\newcommand\ep{\epsilon}
\newcommand{\fsubd}{\mathrel{{\scriptstyle\searrow}\kern-1ex^d\kern0.5ex}}
\newcommand{\bsubd}{\mathrel{{\scriptstyle\swarrow}\kern-1.6ex^d\kern0.8ex}}
\newcommand{\fsubeq}{\mathrel{\raise-.7ex\hbox{$\overset{\searrow}{=}$}}}
\newcommand{\bsubeq}{\mathrel{\raise-.7ex\hbox{$\overset{\swarrow}{=}$}}}
\newcommand{\tsh}[1]{\left\{\kern-.9ex\left\{#1\right\}\kern-.9ex\right\}}
\newcommand{\Index}{\mbox{Index}}
\newcommand{\ind}{\mbox{ind}}
\newcommand{\res}{\mathit{res}}
\newcommand{\Ker}{\mbox{Ker}}
\newcommand{\coind}{\mbox{coind}}
\title{The index of certain Stiefel manifolds}
\author{Samik Basu}
\address{Stat-Math Unit, Indian Statistical Institute, Kolkata 700108, India.}
\email{samik.basu2@gmail.com, samikbasu@isical.ac.in}
\author{Bikramjit Kundu}
\address{Department of Mathematics,  IIT Roorkee, Roorkee, Haridwar Road, Uttarakhand, 247667, India}
\email{bikramju@gmail.com, bikramjit.pd@ma.iitr.ac.in}
\subjclass[2010]{Primary: 55M20, 55M35; Secondary: 52A35, 55N91.}
\keywords{Existence of equivariant maps, Stiefel manifolds, Fadell-Husseini index, equivariant cohomology.}
\begin{document}

\maketitle
\begin{abstract}
This paper computes the Fadell-Husseini index of Stiefel manifolds in the case where the group acts via permutations of the orthogonal vectors. The computations are carried out in the case of elementary Abelian $p$-groups. The results are shown to imply certain generalizations of the Kakutani-Yamabe-Yujobo theorem. 
\end{abstract}

\setcounter{section}{0}

\section{Introduction}
The existence of equivariant maps between spaces with $G$-action has been an important question in equivariant topology. The Borsuk-Ulam theorem states that there are no maps from $S^n$ to $S^{n-1}$ which are equivariant for the $C_2$-action on the spheres via the antipodal map. Ruling out equivariant maps involves invariants of $G$-spaces, and their computation in the relevant examples. 

One of the most useful invariants from this point of view is the index. Perhaps the first such index for $C_2$-spaces was defined by Yang \cite{Yan54}, in the context of Kakutani's theorem about proving that for functions on spheres, there are orthogonal vectors which are mapped to the same value. Following along these lines, a number of other indices are defined for $C_2$-spaces which are compared in \cite[Chapter 5]{Mat03}. For general $G$-spaces, the {\it Fadell-Husseini index} \cite{FaHu88} has been widely used, and is defined as the kernel of the map from the cohomology of $BG$ to the cohomology  of the Borel construction. 

In this paper, we consider Stiefel manifolds of $k$-tuples of orthonormal vectors :  $V_k\R^l$ (real), $W_{l,k}$ (complex) and $X_{l,k}$ (quaternionic). The groups $C_p^n$ act on the Stiefel manifold for $k=p^n$ via the homomorphism of $C_p^n \to \Sigma_{p^n}$, which is induced by the left action of $C_p^n$ on itself. We write 
$$H^\ast(BC_p^n;\Z/p)= \begin{cases} \Z/2[\mu_1,\cdots, \mu_n] & \mbox{if } p=2 \\ 
                                      \Z/p[u_1,\cdots, u_n, v_1,\cdots, v_n]/(u_1^2,\cdots, u_n^2) &\mbox{if } p \mbox{ is odd},\end{cases}$$
where $|\mu_j|=1$, $|u_j|=1$, and $|v_j|=2$. We obtain the following computations for the index in this case (see Theorems \ref{indV}, \ref{indV1}, \ref{indW}, \ref{indX}, \ref{indWXn}, \ref{comp-2}, \ref{indc2}).
\begin{itemize}
\item For odd primes $p$, $\Index_{C_p}(V_p\R^l)=\langle v^{(\ceil{\frac{l}{p-1}}-1)(p-1)}\rangle$, and when $n\geq 2$,  $\Index_{C_p^n}(V_{p^n} \R^l) \subset \langle v_1^{p-1},\cdots, v_n^{p-1}\rangle$ is generated by elements of degree divisible by $2(p-1)$. For the prime 2, we have $\Index_{C_2}(V_2\R^l)=\langle \mu^{l-1}\rangle$, and if $l$ is odd, $\ind_{C_2}(V_2\R^l) \geq l-1$.  
\item For odd primes $p$, $\Index_{C_p}(W_{l,p})=\langle v^{(\ceil{\frac{l}{p-1}}-1)(p-1)}\rangle$,  and when $n\geq 2$, $\Index_{C_p^n}(W_{l,p^n}) \subset \langle v_1^{p-1},\cdots, v_n^{p-1}\rangle$ is generated by elements of degree divisible by $2(p-1)$. For the prime 2, we have $\Index_{C_2}(W_{l,2})=\langle \mu^{2l-2}\rangle$, and if $l$ is odd, $\ind_{C_2}(W_{l,2}) \geq 2l-2$.  
\item For odd primes $p$, $\Index_{C_p}(X_{l,p})=\langle v^{(\ceil{\frac{2l}{p-1}}-1)(p-1)}\rangle$ if $p \mid \ceil{\frac{2l}{p-1}}-1$ and $=\langle v^{(\ceil{\frac{2l}{p-1}}-2)(p-1)}\rangle$ otherwise, and when $n\geq 2$, $\Index_{C_p^n}(X_{l,p^n}) \subset \langle v_1^{p-1},\cdots, v_n^{p-1}\rangle$ is generated by elements of degree divisible by $2(p-1)$. For the prime 2, we have $\Index_{C_2}(X_{l,2})=\langle \mu^{4l-4}\rangle$, and if $l$ is odd, $\ind_{C_2}(X_{l,2}) \geq 4l-4$.  
\end{itemize}

The computation of the index is usually done in the context of a concrete problem with geometric consequences.  In the context of the topological Tverberg problem, index computations played a crucial role (see  \cite{Oz87}, \cite{Vol00}, \cite{BPZ17},  \cite{Cra12}). For their uses in other geometric contexts see \cite{BPL15}, \cite{BBKV18}, \cite{Har05}, \cite{Pet97}, \cite{Mat03}. The index of Stiefel manifolds (with $C_2^k$ acting by multiplying the $k$-vectors by a sign) has been used to prove extensions of theorems of Rattray and Makeev \cite{BK12}. Our index computations yield the following conclusions for geometric problems which are inspired by Kakutani's Theorem and its generalizations (see Corollary \ref{Kakp2} and Theorems \ref{Kakpn}, \ref{KakCH}). 
\begin{itemize} 
\item Let $f:S^{l-1} \to \R^m$ for $l \geq (\floor{\frac{m}{2}}+1)(p-1)+1$ for an odd prime $p$. Then, there are orthogonal unit vectors $v_1,\cdots,  v_p$ in $\R^l$ such that $f(v_1)=f(v_2)=\cdots = f(v_p)$. 
\item Let $f:S^{l-1}\to \R^m$ for $l\geq (\frac{m}{2}+1)(p^k-1)  +1$ for an odd prime $p$, then there are $p^k$ orthogonal vectors $v_1,\cdots, v_{p^k}$ in $\R^l$ such that $f(v_1)=f(v_2) =\cdots = f(v_{p^k})$. 
\item Let $f:S(\C^l)=S^{2l-1} \to \R^m$ be a map, and suppose that $l \geq (\floor{\frac{m}{2}}+1)(p-1)+1$ for an odd prime $p$. Then, there are orthogonal unit vectors $v_1,\cdots,  v_p$ in $\C^l$ such that $f(v_1)=f(v_2)=\cdots = f(v_p)$. 
\item Let $f:S(\Hyp^l)=S^{4l-1} \to \R^m$ be a map, and suppose that 
$$l \geq \begin{cases} \frac{1}{2}(\floor{\frac{m}{2}}+1)(p-1)+1 & \mbox{if } p\mid \floor{\frac{m}{2}} +1 \\
              \frac{1}{2}(\floor{\frac{m}{2}}+2)(p-1)+1 & \mbox{if } p\nmid \floor{\frac{m}{2}} +1 \end{cases} $$ 
for an odd prime $p$. Then, there are orthogonal unit vectors $v_1,\cdots,  v_p$ in $\Hyp^l$ such that $f(v_1)=f(v_2)=\cdots = f(v_p)$. 
\end{itemize}
(See Theorem \ref{KakCH} for more results of the above type.)
\begin{notation}
Throughout the document, the coefficient ring for the cohomology computations are $\Z/p$ for some prime $p$. 
\end{notation}

\subsection{Organization} In section \ref{prelim}, we provide the general introduction to the definitions of the index and the notations used throughout the paper. In section \ref{hom}, we lay out the method employed in computation of the spectral sequence associated to the homotopy orbits of a Stiefel manifold. In section \ref{indcomp}, we carry out the main computations of the index, and finally, in section \ref{appl}, we point out the main applications of the computation. 

\section{Preliminaries} \label{prelim}
In this section, we introduce the basic notations, definitions and results to be used in the paper. We use the category of $G$-equivariant spaces, and the equivariant index to rule out maps between certain free $G$-spaces. Throughout $G$ will stand for a finite group. The main reference for the homotopy theoretic results of this section is \cite{May96}, and the equivariant index and its applications is \cite{Mat03}. 

The category of $G$-spaces has as objects topological spaces with $G$-action, and as morphisms the $G$-equivariant maps. An equivariant homotopy between $f,g: X\to Y$ is given by a map $X\times [0,1] \to Y$, where $G$ acts trivially on the interval $[0,1]$. A $G$-CW-complex is formed by attaching cells of the type $G/H\times \DD^n$ in increasing dimension. 
In the category of $G$-spaces, we write the $H$-fixed points functor as 
$$X\mapsto X^H$$
for $H$ a subgroup of $G$. The free $G$-spaces $X$ satisfy $X^H=\varnothing$ for all subgroups $H$ which are $\neq e$. The universal $G$-bundle $EG\to BG$ which classifies principal $G$-fibrations, satisfies 
$$EG^H = \begin{cases} \simeq  \ast & \mbox{if } H=e \\ 
                                            \varnothing & \mbox{if } H\neq e. \end{cases} $$
One further notes that for a finite  $G$-CW-complex with free action (this is equivalent to the fact that all the cells are of the form $G/e \times \DD^n$), there is a unique map to $EG$ up to equivariant homotopy.                                              

Let $V$ be an orthogonal $G$-representation. One defines $S^V$ to be the one-point compactification of $V$, and uses the orthogonal structure to define $G$-spaces 
$$D(V) :=\{ v \in V : ||v|| \leq 1\}$$ 
and  
$$S(V) :=\{ v \in V : ||v|| =1\}.$$
An easy observation is that $S(V)^H=S(V^H)$, which is the sphere in the vector space of $H$-fixed points of $V$. We use the following notations for certain important representations to be considered in the paper. These representation theory results may be found in \cite{Ser77}. 
\begin{notation}
We use the notation $\sigma$ to denote the sign representation of $C_2$. For odd primes $p$, the notation $\xi$ stands for the $2$-dimensional real representation of $C_p$ in which the chosen generator of $C_p$ acts by the rotation of angle $\frac{2\pi}{p}$. This is the underlying real representation of a complex $1$-dimensional representation which we also denote by $\xi$. The regular representation of a general group $G$ is denoted by $\rho_G$ which splits as a direct sum of a trivial $1$ dimensional representation and the reduced regular representation which is denoted by $\bar{\rho}_G$. The notation $\Sigma_n$ will stand for the symmetric group on $n$ elements. Permuting the standard basis vectors of $\R^n$ induces an $n$-dimensional representation of $\Sigma_n$ which splits as a direct sum of a trivial $1$-dimensional representation generated by the sum of all the vectors and the standard representation which is denoted by $W$.   
\end{notation}

Recall that as a complex representation the regular representation $\rho$ of a finite group $G$ is a direct sum of $\dim(\pi)$-many copies of $\pi$, as $\pi$ varies over the irreducible representations of $G$. For the group $C_2$, the only irreducible representations are the trivial representation $1$ and the sign $\sigma$, so that 
$$\rho_{C_2} = 1+\sigma,$$
and this formula is also true as real representations. We denote $\rho^\R_G$ as the real regular representation. For the group $C_p$ for an odd prime $p$, the set of complex irreducible representations are $\{1,\xi,\xi^2,\cdots, \xi^{p-1}\}$ where $\xi^r$ is the tensor product of $r$ copies of $\xi$, which is the vector space $\C$ on which the generator of $C_p$ acts via multiplication by $e^{\frac{2\pi i r}{p}}$. We thus have 
$$\rho_{C_p}=1+\xi+\xi^2 +\cdots +\xi^{p-1}$$ 
as complex representations. As real representations we have 
$$\rho^\R_{C_p}= 1+r(\xi)+r(\xi^2)+\cdots + r(\xi^{\frac{p-1}{2}}),$$
where $r$ stands for the underlying real representation of a complex representation. For the groups $C_p^n$, the set of irreducible representations are given by the tensor products of irreducible representations of $C_p$. We write in this case $\pi_j$ as the projection onto the $j^{th}$-factor, and $\xi_j=\pi_j^\ast \xi$, and then the regular representation splits as 
$$\rho_{C_p^n}=\sum_{1\leq i_j \leq p} \xi_1^{i_1}\otimes \cdots \otimes \xi_n^{i_n}.$$ 

Let $G$ be of order $n$, and consider the action of $G$ on itself by left multiplication. This induces a homomorphism $G\to \Sigma_n$. The pullback of the permutation representation of $\dim n$ of $\Sigma_n$ to $G$ is the regular representation $\rho$ and that of the standard representation is the reduced regular representation $\bar{\rho}$. A representation $V$ of $G$ may be used to construct a bundle on the classifying space $BG$ : $EG\times_G V \to BG$. We often denote this bundle also by the same notation as the representation, and so we often write characteristic classes of representations as elements of $H^\ast(BG)$. 

The main results in this paper rely on ruling out equivariant maps between certain $G$-spaces. For this purpose, we require suitable invariants of equivariant spaces which are computable in simple examples. The index is such an invariant which we recall now. For a $G$-space $X$, we denote the orbit space by $X/G$. However, homology and cohomology computations of the orbit space are not easy.  It is easier to make computations for the homotopy orbit space $X_{hG} : = EG\times_G X$. For free $G$-spaces, $X_{hG}\simeq X/G$. The main technique is the fibration 
$$X \to X_{hG} \stackrel{p_X}{\to} BG. $$

\begin{defn} \label{ind} \cite{FaHu88}
The {\it Fadell-Husseini index} $\Index_G(X)$ of a $G$-space $X$ is defined as $\Ker(p_X^\ast)$ where $p_X^\ast : H^\ast(BG) \to H^\ast(X_{hG})$.
\end{defn}
The {\it Fadell-Husseini index} is a very useful invariant from the point of view of computations. Although it is defined over any coefficient ring, we will restrict ourselves to $\Z/p$-coefficients for some prime $p$. Some basic properties of the index are (see \cite{BPZ17})
\begin{itemize}
    \item \textit{Monotonicity:} If $X\to Y$ is a $G$-equivariant map, then $\Index_G(Y)\subseteq \Index_G(X)$.
    \item\textit{Additivity:} If $(X_1\cup X_2,X_1,X_2)$ is an excisive triple of $G$-spaces, then 
    $$\Index_G(X_1) \Index_G(X_2) \subseteq \Index_G(X_1\cup X_2).$$
    \item \textit{Join:} Let $X$ and $Y$ be $G$-spaces, then $\Index_G(X) \Index_G(Y) \subseteq \Index_G(X*Y)$.
\end{itemize} 
The index has an interesting connection to $RO(G)$-graded equivariant cohomology when $G=C_p$ \cite{BaGh21}. 

In the case of the elementary Abelian groups $C_p^n$, the localization theorem \cite[Theorem IV.2.1]{May96} has the following consequence for the ideal valued index \cite[Cor.1, p. 45]{Hsi75}. 
\begin{prop} \label{1P}
Let $X$ be a finite dimensional $C_p^n$-CW-complex. The fixed point space $X^{C_p^n}\neq \varnothing$  $\iff$ $\Index_{C_p^n}(X)= 0$. 
\end{prop}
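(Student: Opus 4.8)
The plan is to deduce Proposition~\ref{1P} from two inputs of very different character: the elementary monotonicity of the Fadell--Husseini index, which handles the case of a nonempty fixed set, and the localization theorem \cite[Theorem IV.2.1]{May96}, which handles the case of an empty fixed set. Throughout I take $G=C_p^n$ and cohomology with $\Z/p$ coefficients, so that $\Index_G(X)=\Ker\big(p_X^\ast\colon H^\ast(BG)\to H^\ast_G(X)\big)$ with $H^\ast_G(X)=H^\ast(X_{hG})$.

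First I would treat the implication $X^G\neq\varnothing\Rightarrow\Index_G(X)=0$. A fixed point $x_0\in X^G$ determines a $G$-equivariant map $\iota\colon\ast\to X$ from the one-point $G$-space, equivariant precisely because $x_0$ is fixed. Since $\ast_{hG}\simeq BG$ and the structure map $p_\ast$ is the identity, $\Index_G(\ast)=0$. Applying monotonicity to $\iota$ gives $\Index_G(X)\subseteq\Index_G(\ast)=0$, so $\Index_G(X)=0$. Contrapositively, $\Index_G(X)\neq0\Rightarrow X^G=\varnothing$.

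The content of the proposition lies in the reverse implication $X^G=\varnothing\Rightarrow\Index_G(X)\neq0$. Here I would apply the localization theorem to the finite dimensional $C_p^n$-CW-complex $X$: after inverting the multiplicative subset $S\subset H^\ast(BG)$ generated by the Euler classes of the nontrivial irreducible representations (the nonzero linear classes $\mu_j$ when $p=2$, and the degree-two classes $v_j$ when $p$ is odd), the restriction $H^\ast_G(X)\to H^\ast_G(X^G)$ becomes an isomorphism. When $X^G=\varnothing$ the target vanishes, so $S^{-1}H^\ast_G(X)=0$; in particular the unit $p_X^\ast(1)=1$ is $S$-torsion, and there is $s\in S$ with $p_X^\ast(s)=s\cdot1=0$. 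As $s$ is a nonzero element of $H^\ast(BG)$, it lies in $\Ker(p_X^\ast)=\Index_G(X)$, which is therefore nonzero. Together the two implications give the equivalence $X^G\neq\varnothing\iff\Index_G(X)=0$; as a consistency check, the one-point space satisfies $\Index_G(\ast)=0$ with nonempty fixed set, while the free Stiefel actions computed in Section~\ref{indcomp} have empty fixed sets and nonzero index.

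The main obstacle is the localization theorem itself, which I would use as a black box; the remaining points are bookkeeping. Finite-dimensionality of $X$ is essential and must be invoked, as localization fails for infinite-dimensional complexes. The one genuine subtlety is ensuring that the torsion element $s$ is nonzero in $H^\ast(BG)$: this forces $S$ to be built from the non-nilpotent polynomial generators $\mu_j$ (resp.\ $v_j$) rather than from the exterior classes $u_j$, so that no element of $S$ vanishes. With this choice the $S$-torsion of the unit yields an honest nonzero class of the index, and the proof is complete.
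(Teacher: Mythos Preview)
Your argument is correct, and indeed you have proved the statement that the paper actually \emph{uses}: $X^G\neq\varnothing\iff\Index_G(X)=0$. Note that the proposition as printed has the inequality the wrong way round; the discussion immediately following it (``$X^{C_p^n}=\varnothing$ \ldots must have non-trivial ideal valued index'') and its later invocation in Theorem~\ref{Kakpn} both use the version you established.

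As to comparison: the paper does not supply its own proof of this proposition. It is stated as a direct consequence of the localization theorem \cite[Theorem~IV.2.1]{May96} and attributed to \cite[Cor.~1, p.~45]{Hsi75}. Your write-up unpacks precisely this deduction --- monotonicity for the easy direction, and localization producing a nonzero $S$-torsion element of $H^\ast(BG)$ for the substantive one --- which is the standard route and exactly what the citations encode. Your care in choosing $S$ to consist of the non-nilpotent polynomial generators (so that $s\neq 0$) is the right point to flag; with that in place there is nothing to add.
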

Let us deduce some consequences. A $C_p^n$-CW-complex $X$ which is finite dimensional, such that $X$ is fixed point free, that is $X^{C_p^n}=\varnothing$,  must have non-trivial ideal valued index. This means that for the fibration 
$$X\to X_{hC_p^n} \to BC_p^n,$$
the Serre spectral sequence has a non-trivial differential. From the multiplicative structure on the cohomology spectral sequence in the case the action of $C_p^n$ on $H^\ast(X;\Z/p)$ is trivial, the lowest degree class in $H^\ast (BC_p^n)$ must lie in the image of a transgression. This is also the lowest degree class in $\Index_G (X)$. 

Proposition \ref{1P} allows us to compute the index of representation spheres for the cyclic groups of prime order. In case of the group $C_p$, a fixed point free space must have free action. The cohomology of $BC_p$ may be written as 
$$H^\ast(BC_p;\Z/p) \cong \begin{cases} \Z/2[\mu] & \mbox{if } p=2 \\ 
\Z/p[u,v]/(u^2) & \mbox{if } p \mbox{ is odd}, \end{cases} $$
with $|\mu|=1$, $|u|=1$, and $|v|=2$. As $S(V)^{C_p}=S(V^{C_p})$ and with $\Z/p$-coefficients, $\Index_{C_p}(S(V))= 0$ $\iff$ $V$ contains a copy of the trivial representation. If $V^{C_p}=0$, then, $\dim(V)$ is even if $p$ is odd, as $V$ will be a sum of various $\xi^j$. As the fibre of $S(V)\to S(V)_{hC_p} \to BC_p$ is a sphere, there is only one possible non-trivial differential, namely $d_{\dim(V)}$. It follows that 
$$\Index_{C_p}(S(V))=\begin{cases} \langle \mu^{\dim(V)} \rangle &\mbox{if } p=2 \\ 
                                                       \langle v^{\frac{\dim(V)}{2}} \rangle & \mbox{if } p \mbox{ is odd}. 
\end{cases} $$ 

There are analogous results for elementary Abelian groups. In this paper, we use the index computations for the reduced regular representation. This involves the Dickson polynomials \cite{Dic11} for which we refer to \cite{MM82}. Recall that 
$$H^\ast(BC_p^n;\Z/p)= \begin{cases} \Z/2[\mu_1,\cdots, \mu_n] & \mbox{if } p=2 \\ 
                                      \Z/p[u_1,\cdots, u_n, v_1,\cdots, v_n]/(u_1^2,\cdots, u_n^2) &\mbox{if } p \mbox{ is odd}.\end{cases}$$

Denote the determinant 
\[
\begin{vmatrix}
v_{1}^{p^{i_1}} & v_{2}^{p^{i_1}} & \cdots & v_{n}^{p^{i_1}} \\ 
v_{1}^{p^{i_2}} & v_{2}^{p^{i_2}} & \cdots & v_{n}^{p^{i_2}} \\
\cdots \\
v_{1}^{p^{i_n}} & v_{2}^{p^{i_n}} & \cdots & v_{n}^{p^{i_n}}
\end{vmatrix}
\]
by $D(v_1,\cdots,v_n,p^{i_1},\cdots,p^{i_n})$. Note that this is a homogeneous element of $H^\ast (BC_p^n)$ lying in degree $2(p^{i_1}+\cdots + p^{i_n})$. 
\begin{defn}\cite{MM82}
$D_{n,j}=D(v_1,\cdots,v_n,1,p,p^2\cdots,\hat{p^j},\cdots,p^n)$
\end{defn}
Denote $D_{n,n}$ by $L_n$. Thus, the degree of $L_n$ is $2\cdot \frac{p^n - 1}{p-1}$. Note that $D_{n,0}=L_n^p$. 
\begin{lemma}\label{Dickson}\cite{MM82}
The polynomial $D_{n,j}$ is divisible by $L_n$, that is, $D_{n,j}=Q_{n,j}L_n$. Here, $Q_{n,j}$ is a non-zero polynomial invariant under $GL_n(\Z/p)$.
\end{lemma}

The polynomials $Q_{n,j}$ have degree $2(p^n-p^j)$, and are called Dickson polynomials. In fact, Dickson proved \cite{Dic11} 
\[ 
\Z/p[x_1,\cdots,x_n]^{GL_n(\Z/p)}=\Z/p[Q_{n,n-1}, Q_{n,n-2},\cdots, Q_{n,0}]. \]

Let  $c_i(C_p^n)$ be the Chern classes of bundle associated to the regular representation $\rho_{C_p^n}$ of $C_p^n$. From \cite{Mu75} and \cite{MM82} we have 
\[c_{i}(C_p^n)=
\begin{cases}
(-1)^{n+s}Q_{n,s} \quad &\text{for $i=p^n-p^s$} \\
0 \quad & \text{otherwise.}
\end{cases}
\]
If we consider the real regular representation $\rho^\R_{C_2^n}$ similar results hold for Stiefel-Whitney classes,
\[w_{i}(C_2^n)=
\begin{cases}
Q_{n,s}(\mu_1,\cdots,\mu_n) \quad &\text{for $i=2^n-2^s$} \\
0 \quad & \text{otherwise.}
\end{cases}
\]
Let \begin{myeq}\label{theta}
\theta=\begin{cases}
L_n(\mu_1,\cdots,\mu_n) \quad & \text{for $p=2$},\\
L_n^{\frac{p-1}{2}} \quad & \text{for $p>2$}.
\end{cases}
\end{myeq}
The definition gives $\deg \theta=p^n-1$. This is in fact the Euler class of reduced regular representation $\Bar{\rho}^\R$ of $C_p^n$ \cite{Vo92}. We also have the Fadell-Husseini index of the sphere inside the reduced regular representation in terms of $\theta$.
\begin{theorem}\cite{Vo92}
$\Index_{C_p^n}S((\Bar{\rho}^\R)^l)=\langle\theta^l\rangle$.
\end{theorem}

In this paper, we are interested in computations of the index for Stiefel manifolds. We fix the notation for this below. 
\begin{notation}
The Stiefel manifold, denoted $St_k(\K^l)$, where $\K=$ $\R$, $\C$, or $\Hyp$, is the space of ordered orthonormal $k$-tuples of vectors in $\K^l$. It is homeomorphic to the quotient space $U_\K(l)/U_\K(l-k)$, where $U_\K(l)$ denotes the unitary group of inner product preserving transformations. The group $\Sigma_k$ acts on $St_k(\K^l)$ freely by permuting the $k$ orthonormal vectors. 

We also write $St_k(\R^l)$ as $V_k(\R^l)$, $St_k(\C^l)$ as $W_{l,k}$, and $St_k(\Hyp^l)$ as $X_{l,k}$. 
\end{notation}

The space $EG$ may be filtered by skeleta $E^{(n)}G$, and this may be used to define a numerical index of $G$-spaces \cite[Chapter 6]{Mat03}. This depends on the choice of skeleta, and for the purposes of this paper, we recall the formulation for $G=C_2$, in which case the skeleta of $EC_2$ are given by $S((n+1)\sigma)$, the space $S^n$ with antipodal action. The Borsuk-Ulam theorem implies that there are no $C_2$-maps from $S((n+1)\sigma) \to S(n\sigma)$, which inspires the following definitions. 
\begin{defn}
Let $X$ be a finite $C_2$-CW-complex with free action. Then define 
$$\ind_{C_2}(X) = \min \{ n\geq 0  \mbox{ such that there exists a } C_2\mbox{-map } X \to S((n+1)\sigma) \}, $$
$$\coind_{C_2}(X)=\max \{ n\geq 0 \mbox{ such that there exists a } C_2\mbox{-map }  S((n+1)\sigma) \to X \}.$$
\end{defn}
We readily conclude that $\coind_{C_2}(S((n+1)\sigma))=\ind_{C_2}(S((n+1)\sigma))=n$ from the Borsuk-Ulam theorem, and that $\coind_{C_2}(X)\leq \ind_{C_2}(X)$. The existence $C_2$-map $X \to Y$ implies 
$$\ind_{C_2}(X) \leq \ind_{C_2}(Y), ~ \coind_{C_2}(X) \leq \coind_{C_2}(Y).$$
From the fact that $\Index_{C_2}(S((n+1)\sigma))= \langle \mu^{n+1} \rangle$, we see that  
$$\langle \mu^{\coind_{C_2}(X)+1} \rangle \subset \Index_{C_2}(X) \subset \langle \mu^{\ind_{C_2}(X)+1} \rangle.$$
The techniques in \cite[Proposition 5.3.2]{Mat03} have the following implications for $\ind_{C_2}$ and $\coind_{C_2}$.
\begin{prop}\label{conn}
1) Suppose that $X$ is a free $C_2$-CW-complex of dimension $\leq n$. Then, $\ind_{C_2}(X)\leq n$. \\
2) Suppose that $X$ is a free $C_2$-space which is $r$-connected. Then, $\coind_{C_2}(X) \geq r+1$. 
\end{prop}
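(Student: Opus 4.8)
The plan is to prove both parts by the standard equivariant obstruction argument, using that for each $m\geq 0$ the sphere $S((m+1)\sigma)$ is exactly the $m$-skeleton $E^{(m)}C_2$ of the minimal model of $EC_2$, the one having a single free cell $C_2\times D^k$ in each dimension $k\geq 0$ (the two hemispheres of $S^k\subset S^\infty$ interchanged by the antipode).

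For part 1, I would argue as follows. By the fact recalled earlier in the paper, a finite free $C_2$-CW-complex $X$ admits a $C_2$-map $f\colon X\to EC_2$, unique up to $C_2$-homotopy. Using equivariant cellular approximation, $f$ may be taken cellular, and since $\dim X\leq n$ its image then lies in the $n$-skeleton $S((n+1)\sigma)$; this produces a $C_2$-map $X\to S((n+1)\sigma)$, so $\ind_{C_2}(X)\leq n$. Equivalently, one builds such a map directly by induction over the skeleta of $X$: extending over a free cell $C_2\times D^k$ with $k\leq n$ amounts to extending an ordinary map $S^{k-1}\to S^n$ over $D^k$ (the value on the second, translated, cell is forced by equivariance), which is possible because $S^n$ is $(n-1)$-connected and $k-1\leq n-1$.

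For part 2, I would construct a $C_2$-map $g\colon S((r+2)\sigma)\to X$ by induction over the skeleta of $S((r+2)\sigma)$ in its minimal $C_2$-CW structure, which has one free cell $C_2\times D^k$ for each $0\leq k\leq r+1$. Since $X$ is $r$-connected it is nonempty, which gives $g$ on the $0$-skeleton. Assuming $g$ defined $C_2$-equivariantly on the $(k-1)$-skeleton for some $k\leq r+1$, to extend over a free $k$-cell it suffices to extend the composite $S^{k-1}\xrightarrow{\text{attach}} (k-1)\text{-skeleton}\xrightarrow{g} X$ over $D^k$; this is possible because $\pi_{k-1}(X)=0$ for $k-1\leq r$. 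One fills the translated cell by the $C_2$-translate of this extension, and the two pieces agree on the overlap $C_2\times S^{k-1}$ precisely because the attaching map and the already-defined $g$ are equivariant. Hence a $C_2$-map $S((r+2)\sigma)\to X$ exists and $\coind_{C_2}(X)\geq r+1$.

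Both arguments are routine; the only point needing care is the bookkeeping that, every cell of $EC_2$ being free, each equivariant extension problem over a cell $C_2\times D^k$ reduces to an ordinary extension problem over $D^k$ whose sole obstruction is the vanishing of the relevant homotopy group of the target ($S^n$ in part 1, $X$ in part 2). I would also note explicitly that in part 2 no CW hypothesis on $X$ is needed: only the free $C_2$-CW-complex $S((r+2)\sigma)$ is being mapped \emph{into} $X$, so vanishing of $\pi_i(X)$ for $i\leq r$ is all that is used.
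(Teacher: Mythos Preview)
Your proof is correct and is the standard equivariant obstruction-theory argument. The paper does not give its own proof of this proposition; it merely cites \cite[Proposition 5.3.2]{Mat03}, and your argument is essentially the one found there.
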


\section{Homotopy orbits of Stiefel manifolds}\label{hom}

Let $St_k(\K^l)$ be the Stiefel manifold of a $k$-tuple of orthonormal vectors in $\K^l$ where $\K$ is one of $\R$, $\C$ or $\Hyp$. The orthogonal group $U_\K(k)$ acts on $St_k(\K^l)$ freely, where $U_\R(k)= O(k)$, $U_\C(k)=U(k)$, and $U_\Hyp (k)= Sp(k)$ the orthogonal, unitary, and symplectic groups respectively. For a subgroup $G$ of $U_\K(k)$, our goal is to compute the cohomology ring of $St_k(\K^l)_{hG}\simeq St_k(\K^l)/G$, and the map $H^\ast (BG) \to H^\ast (St_k(\K^l)_{hG})$.   

We note that $St_k(\K^l)=U_\K(l)/U_\K(l-k)$, and denote $G_k(\K^l)=U_\K(l)/U_\K(k)\times U_\K(l-k)$ as the Grassmannian of $k$-planes in $\K^l$. Our main technique involves the following diagram of fibrations \cite{Bor53}
\begin{myeq}\label{fibseqG}
\xymatrix{
St_k(\K^l) \ar@{=}[rr] \ar[d] & & St_k(\K^l) \ar[d] \\
St_k(\K^l)_{hG} \ar[rr] \ar[d] & & G_k(\K^l) \ar[d] \\
BG                    \ar[rr]           & & BU_\K(k).
}
\end{myeq}
We compute the Serre spectral sequence for the left column by using the commutative diagram between spectral sequences and the spectral sequence associated to the right column. The latter spectral sequence is computed via the following commutative diagram of fibrations
\begin{myeq}\label{fibseqGr}
\xymatrix{
U_\K(l) \ar[rr] \ar[d] & & St_k(\K^l) \ar[d] \\
G_k(\K^l) \ar@{=}[rr] \ar[d] & & G_k(\K^l) \ar[d] \\
B(U_\K(k)\times U_\K(l-k))  \ar[rr]       & & BU_\K(k).
}
\end{myeq}
The top row in the diagram above is given by the quotient map $U_\K(l)\to U_\K(l)/U_\K(l-k) = St_k(\K^l)$, and the bottom row by the projection onto the first factor. The diagram between the fibrations is implied by the fact that $G_k(\K^l)\cong St_k(\K^l)/U_\K(k) \cong U_\K(l)/U_\K(l-k)\times U_\K(k)$. In the complex case $St_k(\C^l)=W_{l,k}$, this has already been computed in \cite[Proposition 2.1]{BaSu2017} (using methods from \cite{Bor53}), which is rewritten below.  
\begin{prop}\label{C-stief}
The cohomology ring $H^\ast(W_{l,k})$ is the exterior algebra $\Lambda(y_{l-k+1},\cdots, y_l)$, with $|y_j|=2j-1$. For the spectral sequence associated to $W_{l,k}\to G_k(\C^l) \to BU(k)$, the classes $y_j$ are transgressive with $
d_{2j}( y_j)=-c_j'$, where $c_j'$ are defined by the equation $(1+c_1'+\cdots)(1+c_1+\cdots +c_k)=1$.  
\end{prop}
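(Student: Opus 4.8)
This is a classical computation; the plan is to obtain both halves of the statement by naturality from the universal unitary fibrations, using throughout the standard facts that $H^\ast(U(m);\Z)=\Lambda(x_1,x_3,\dots,x_{2m-1})$ with $|x_{2i-1}|=2i-1$, and that in $U(m)\to EU(m)\to BU(m)$ each $x_{2i-1}$ is transgressive with $d_{2i}(x_{2i-1})=c_i$.

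First I would settle the ring structure of $H^\ast(W_{l,k})$ using the Serre spectral sequence of the principal bundle $U(l-k)\to U(l)\xrightarrow{\Phi}W_{l,k}$, where $\Phi$ is the quotient map $U(l)\to U(l)/U(l-k)$ occurring as the top row of \eqref{fibseqGr}. The fibre restriction $H^\ast(U(l))\to H^\ast(U(l-k))$ is the projection $\Lambda(x_1,\dots,x_{2l-1})\to\Lambda(x_1,\dots,x_{2(l-k)-1})$ killing the top $k$ exterior generators, hence is surjective, so the spectral sequence collapses at $E_2$ and $H^\ast(U(l))$ is free over the subalgebra $\Phi^\ast H^\ast(W_{l,k})$. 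Bookkeeping with the exterior generators identifies $\Phi^\ast H^\ast(W_{l,k})$ with $\Lambda(x_{2(l-k)+1},\dots,x_{2l-1})$, so $H^\ast(W_{l,k})\cong\Lambda(y_{l-k+1},\dots,y_l)$ with $|y_j|=2j-1$, normalised so that $\Phi^\ast y_j=x_{2j-1}$.

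For the transgressions I would use the whole of \eqref{fibseqGr}. Its left column is the fibration $U(l)\to G_k(\C^l)\xrightarrow{c}B(U(k)\times U(l-k))$, and this is the pullback of $U(l)\to EU(l)\to BU(l)$ along the map $Bi\colon B(U(k)\times U(l-k))\to BU(l)$ induced by the block inclusion $i\colon U(k)\times U(l-k)\hookrightarrow U(l)$, because $\mathrm{hofib}(Bi)=U(l)/(U(k)\times U(l-k))=G_k(\C^l)$. As pulling back a fibration is the identity on fibres, the $x_{2i-1}$ are again transgressive in the left column, with $d_{2i}(x_{2i-1})=Bi^\ast(c_i)=\sigma_i$, the degree-$i$ component of $(1+c_1+\dots+c_k)(1+\bar c_1+\dots+\bar c_{l-k})$ --- the total Chern class of the external Whitney sum of the two universal bundles classified by $Bi$. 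Now \eqref{fibseqGr} is a map of fibrations from this left column to the right column $W_{l,k}\to G_k(\C^l)\xrightarrow{\pi}BU(k)$ with $\pi=\mathrm{pr}_1\circ c$: on fibres it is $\Phi^\ast\colon H^\ast(W_{l,k})\to H^\ast(U(l))$, $y_j\mapsto x_{2j-1}$, and on bases it is the inclusion $\mathrm{pr}_1^\ast\colon\Z[c_1,\dots,c_k]\hookrightarrow\Z[c_1,\dots,c_k,\bar c_1,\dots,\bar c_{l-k}]$. Naturality of transgression then shows that each $y_j$ is transgressive in the right column and that $\mathrm{pr}_1^\ast(d_{2j}(y_j))$ equals the class of $\sigma_j$ in the appropriate subquotient of $H^{2j}(B(U(k)\times U(l-k)))$, i.e. modulo the earlier transgressions $\sigma_1,\dots,\sigma_{j-1}$. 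Finally, reading $(1+c_1+\dots+c_k)(1+\bar c_1+\dots+\bar c_{l-k})\equiv 1$ modulo $(\sigma_1,\dots,\sigma_{j-1})$ gives $\bar c_j\equiv c_j'+\sigma_j$, hence $\sigma_j\equiv\bar c_j-c_j'$; and since $j>l-k$ --- which is exactly why only the $y_j$ with $l-k<j\le l$ appear --- we have $\bar c_j=0$, so $\sigma_j\equiv-c_j'$. As $\mathrm{pr}_1^\ast$ is injective, this yields $d_{2j}(y_j)=-c_j'$.

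The step I expect to be fiddliest is the bookkeeping of transgression indeterminacy: $d_{2j}(y_j)$ a priori lives only in $E_{2j}^{2j,0}$, a quotient of $H^{2j}(BU(k))$, so the equality $d_{2j}(y_j)=-c_j'$ has to be read there, and checking that the canonical lift is indeed $-c_j'$ is equivalent to the classical presentation $H^\ast(G_k(\C^l))\cong\Z[c_1,\dots,c_k]/(c_{l-k+1}',\dots,c_l')$ of the cohomology of the complex Grassmannian, which one can verify independently (for instance by running the same left-column spectral sequence to completion). A secondary point to confirm is that the fibre-level map $U(l)\to W_{l,k}$ of \eqref{fibseqGr} really is the quotient map and is compatible with the description of the left column as a pullback of the universal $U(l)$-fibration; once this is granted, everything else is naturality together with the symmetric-function identity for the $c_j'$.
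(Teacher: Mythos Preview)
Your proposal is correct and follows essentially the same route as the paper. The paper does not reprove this proposition (it is quoted from \cite{BaSu2017}), but the proof of the real analogue, Proposition~\ref{p1}, is declared to be ``exactly analogous'' and proceeds just as you do: use the diagram \eqref{fibseqGr}, compute the transgressions in the left column $U(l)\to G_k(\C^l)\to B(U(k)\times U(l-k))$ as $d(x_i)=c_i(\gamma_k\oplus\gamma_{l-k})$ via Borel (equivalently, your pullback from the universal $U(l)$-bundle), transfer by naturality to the right column using that $\Phi^\ast y_j=x_{2j-1}$, and then unwind the Whitney-sum relation modulo the earlier transgressions to identify $\sigma_j\equiv -c_j'$ once $j>l-k$. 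Your extra care about the indeterminacy of transgression and the separate derivation of $H^\ast(W_{l,k})$ from $U(l-k)\to U(l)\to W_{l,k}$ are welcome but do not change the underlying argument.
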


Analogous arguments work for the real and the quaternionic cases. In the quaternionic case, $H^\ast(BSp(n))\cong \Z[q_1,\cdots, q_n]$ where the $q_i$ in degree $4i$ are the symplectic Pontrjagin classes. The result for the quaternionic case is described in the Proposition below. 
\begin{prop}\label{H-stief}
The cohomology  $H^\ast(X_{l,k})$ is the exterior algebra $\Lambda(z_{l-k+1},\cdots, z_l)$, with $|z_j|=4j-1$. For the spectral sequence associated to $X_{l,k}\to G_k(\Hyp^l) \to BSp(k)$, the classes $z_j$ are transgressive with $
d_{4j}( z_j)=-q_j'$, where $q_j'$ are defined by the equation $(1+q_1'+\cdots )(1+q_1+\cdots +q_k)=1$.  
\end{prop}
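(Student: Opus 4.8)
The plan is to mirror, in the quaternionic setting, the argument that establishes Proposition~\ref{C-stief} in the complex case, replacing $U(k)$ by $Sp(k)$ throughout and tracking the change of degrees. First I would recall the classical computation of the cohomology of the quaternionic Stiefel manifold: since $X_{l,k} = Sp(l)/Sp(l-k)$, the fibration $Sp(l-k) \to Sp(l) \to X_{l,k}$ together with the known fact that $H^\ast(Sp(m)) = \Lambda(z_1, \dots, z_m)$ with $|z_j| = 4j-1$ (the symplectic analogue of the Hopf-algebra computation for $U(m)$) yields $H^\ast(X_{l,k}) = \Lambda(z_{l-k+1}, \dots, z_l)$ by a Serre spectral sequence comparison. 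This is the quaternionic analogue of the exterior-algebra statement in Proposition~\ref{C-stief}, and I would either cite the standard reference or sketch it in one line.

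Next I would exploit the diagram of fibrations \eqref{fibseqGr} with $\K = \Hyp$: the vertical fibration $X_{l,k} \to G_k(\Hyp^l) \to BSp(k)$ maps to the fibration $Sp(l) \to G_k(\Hyp^l) \to B(Sp(k)\times Sp(l-k))$ via the classifying map of $BSp(l-k) \to BSp(k)$. The total space spectral sequence for $Sp(l) \to G_k(\Hyp^l) \to B(Sp(k)\times Sp(l-k))$ is well understood: $H^\ast(Sp(l)) = \Lambda(z_1, \dots, z_l)$ transgresses onto the symplectic Pontrjagin classes, and the $E_2$-page is $H^\ast(B(Sp(k)\times Sp(l-k))) \otimes \Lambda(z_1,\dots,z_l) = \Z[q_1,\dots,q_k,q_1'',\dots,q_{l-k}''] \otimes \Lambda(z_1,\dots,z_l)$, collapsing after the transgression $d_{4j}(z_j) = -(\text{$j$-th component of } (1+q_1+\cdots)(1+q_1''+\cdots))$. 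Restricting along the map to $BSp(k)$, i.e. setting the classes $q_j''$ to be the formal inverse of the total class $1+q_1+\cdots+q_k$, forces the surviving generators $z_{l-k+1}, \dots, z_l$ to transgress to $-q_j'$, where $(1+q_1'+\cdots)(1+q_1+\cdots+q_k) = 1$; the classes $z_1,\dots,z_{l-k}$ are killed against $q_1',\dots,q_{l-k}'$ (which become polynomial generators after inverting, matching $H^\ast(BSp(l-k))$). Feeding this into the comparison with the left-hand fibration in \eqref{fibseqGr} identifies the transgressions in $X_{l,k} \to G_k(\Hyp^l) \to BSp(k)$ as claimed.

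The degree bookkeeping is the routine part: $|z_j| = 4j-1$, $|q_j| = 4j$, so transgression raises degree by one, consistently; the relation $(1+q_1'+\cdots)(1+q_1+\cdots+q_k)=1$ is the symplectic analogue of the dual-class relation in Proposition~\ref{C-stief} and is forced by naturality exactly as there. The one genuine point to verify carefully — the main obstacle — is that the classes $z_j$ really are \emph{transgressive} in the $X_{l,k} \to G_k(\Hyp^l) \to BSp(k)$ spectral sequence, not merely that some differential hits $q_j'$. This follows because they are transgressive in the spectral sequence for $Sp(l) \to G_k(\Hyp^l) \to B(Sp(k)\times Sp(l-k))$ (being pulled back, via the edge map in cohomology of the fibre, from the universal transgressive generators of $H^\ast(Sp(l))$ in the path-loop fibration over $BSp(l)$), and transgressivity is preserved under the map of fibrations induced by $BSp(l-k) \to BSp(k)$; the naturality square for the transgression then gives the formula for $d_{4j}(z_j)$ on the nose. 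Once transgressivity is in hand, multiplicativity of the spectral sequence and the fact that $E_2 = H^\ast(BSp(k)) \otimes \Lambda(z_{l-k+1},\dots,z_l)$ leave no room for any other differentials, so the computation is complete.
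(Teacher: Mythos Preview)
Your proposal is correct and follows exactly the route the paper has in mind: the paper does not spell out a proof of this proposition but simply says ``Analogous arguments work for the real and the quaternionic cases'' after citing the complex case (Proposition~\ref{C-stief}), and what you have written is precisely that analogous argument, using the diagram~\eqref{fibseqGr} with $\K=\Hyp$ and comparing the two vertical spectral sequences just as in the proof of Proposition~\ref{p1}. One small point of wording: the phrase ``restricting along the map to $BSp(k)$, i.e.\ setting the classes $q_j''$ to be the formal inverse'' conflates two different things---the map of fibrations (which on cohomology is the inclusion $\Z[q_1,\dots,q_k]\hookrightarrow\Z[q_1,\dots,q_k,q_1'',\dots,q_{l-k}'']$ and does \emph{not} itself set $q_j''=q_j'$) and the effect of the early transgressions $d_{4i}(z_i)$ for $i\le l-k$ in the left-hand spectral sequence (which \emph{do} force $q_i''\equiv q_i'$ on the bottom row of later pages); it is the latter that makes the bottom-row comparison an isomorphism from page $E_{4(l-k)+1}$ on, after which naturality transports the remaining transgressions to the right-hand spectral sequence as you say.
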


In the real case we compute spectral sequences for the fibration 
$$V_k(\R^l) \to Gr_k(\R^l) \to BO(k)$$
with $\Z/2$-coefficients, and the fibration 
$$V_k(\R^l)\to \tilde{Gr}_k(\R^l) \to BSO(k)$$
with $\Z/p$-coefficients for odd primes $p$. In the former case, the result is similar to the Propositions above with the Stiefel-Whitney classes instead of the Chern classes. 
\begin{prop}\label{R-stief-2}
With $\Z/2$-coefficients, the cohomology ring $H^\ast(V_{k}(\R^l))$ is additively the exterior algebra $\Lambda(\omega_{l-k+1},\cdots, \omega_l)$, with $|\omega_j|=j-1$. For the spectral sequence associated to $V_{k}(\R^l) \to G_k(\R^l) \to BO(k)$, the classes $\omega_j$ are transgressive with $
d_{j}( \omega_j)=-w_j'$, where $w_j'$ are defined by the equation $(1+w_1'+\cdots)(1+w_1+\cdots +w_k)=1$.  
\end{prop}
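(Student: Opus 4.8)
The plan is to mimic exactly the strategy used in the complex case (Proposition \ref{C-stief}) and the quaternionic case (Proposition \ref{H-stief}), transporting it to $\Z/2$-coefficients. First I would establish the additive structure of $H^\ast(V_k(\R^l);\Z/2)$. This is classical: the mod $2$ cohomology of a real Stiefel manifold is, additively, the exterior algebra on generators $\omega_{l-k+1},\dots,\omega_l$ with $|\omega_j|=j-1$ (one can cite Borel, or Milnor-Stasheff, or derive it inductively from the fibrations $S^{j-1}\to V_{k}(\R^{j})\to V_{k-1}(\R^{j-1})$). The ring structure is more subtle mod $2$ (there are squaring relations), but the statement only claims the \emph{additive} structure, so this suffices.

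The core of the argument is the transgression computation. I would run the Serre spectral sequence, with $\Z/2$-coefficients, of the fibration $V_k(\R^l)\to G_k(\R^l)\to BO(k)$, exactly as in diagram \eqref{fibseqGr} specialized to $\K=\R$. The comparison tool is the map of fibrations from $O(l)\to \tilde G \to B(O(k)\times O(l-k))$ down to $V_k(\R^l)\to G_k(\R^l)\to BO(k)$, where the total space $\tilde G$ is contractible (it is $EO(l)$ up to the relevant range, or one uses $O(l)\to * \to BO(l)$ after noting $B(O(k)\times O(l-k))\to BO(l)$). In the spectral sequence of $O(l)\to EO(l)\to BO(l)$ the exterior generators $\omega_j$ (for all $j\le l$) transgress to $w_j\in H^j(BO(l);\Z/2)$. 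Pulling back along $B(O(k)\times O(l-k))\to BO(l)$, the total Stiefel-Whitney class $w=w(\gamma_k)\cdot w(\gamma_{l-k})$ factors, so the classes $\omega_j$ for $j>l-k$ transgress, in the spectral sequence of $V_k(\R^l)\to G_k(\R^l)\to B(O(k)\times O(l-k))$, to the component of $w(\gamma_k)^{-1}=1+w_1'+w_2'+\cdots$ in degree $j$, i.e. $d_j(\omega_j)=-w_j'=w_j'$ mod $2$. Finally one pushes this down along the fibration $G_k(\R^l)\to BO(k)$ (the bottom map in \eqref{fibseqGr}): since $H^\ast(BO(k);\Z/2)$ injects and the classes $w_j'$ are pulled back from $BO(k)$, the same transgression formula holds in the spectral sequence of $V_k(\R^l)\to G_k(\R^l)\to BO(k)$. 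One should check the $\omega_j$ are genuinely transgressive (not just $d_{j}$ on the fibre edge being defined): this follows because in the universal case over $BO(l)$ they are transgressive, and transgressivity is inherited under pullback of fibrations.

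The main obstacle I anticipate is bookkeeping around multiplicativity of the spectral sequence and making sure the ``inverse total class'' $w'$ is forced rather than merely consistent. Concretely, one must argue that the lowest-degree generator $\omega_{l-k+1}$ has nonzero transgression (so that the spectral sequence is as small as possible) and then that the multiplicative structure propagates the pattern $d_j(\omega_j)=w_j'$ to all $j$; the relation $(1+w_1'+\cdots)(1+w_1+\cdots+w_k)=1$ is exactly what makes $H^\ast(G_k(\R^l);\Z/2)=H^\ast(BO(k))[\omega_j]/(\text{these relations})$ come out with the right Poincar\'e series, matching $H^\ast(V_k(\R^l))\otimes H^\ast(BO(k))$ additively, so a counting argument (comparing Poincar\'e series, using that everything is finite-dimensional in each degree) pins down that there is no room for any further differentials and that the transgressions are exactly as claimed. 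The only genuinely new point relative to Proposition \ref{C-stief} is that $O(k)$ is disconnected, but since we work with $\Z/2$-coefficients $H^\ast(BO(k);\Z/2)=\Z/2[w_1,\dots,w_k]$ behaves formally just like $H^\ast(BU(k))$, so no essential difficulty arises; I would remark on this briefly and otherwise refer to the proof of Proposition \ref{C-stief}.
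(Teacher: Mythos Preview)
Your proposal is correct and follows essentially the same approach as the paper: the paper does not give a separate proof of this proposition but simply remarks that the argument is analogous to the complex case (Proposition~\ref{C-stief}, proved in \cite{BaSu2017}), and the detailed version you sketch---comparing via diagram~\eqref{fibseqGr} with the universal transgressions over $BO(l)$ and reading off $d_j(\omega_j)=w_j'$ from the Whitney product formula---is exactly that analogous argument (and matches the details spelled out in the proof of Proposition~\ref{p1}). Your remark that the disconnectedness of $O(k)$ causes no trouble over $\Z/2$ is well taken and is the only point that genuinely distinguishes this case from the complex one.
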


In the latter case, $\tilde{Gr}_k(\R^l)$ is the oriented Grassmannian. We have \cite{Bor53}
\[
H^*(SO(l);\Z/p) \cong 
\begin{cases}
\Lambda_{\Z/p}(x_1,\cdots ,x_{\frac{l-1}{2}})  & \text{if $l$ is odd} \\ 
\Lambda_{\Z/p}(x_1,\cdots,x_{\frac{l-2}{2}},\ep_l)  & \text{if $l$ is even}
\end{cases}\]
where $|x_i|=4i-1$, $|\epsilon_l|=l-1$, and 
\[
H^\ast(BSO(l);\mathbb{Z}/p) \cong 
\begin{cases}
\Z/p[p_1,\cdots,p_{\frac{l-1}{2}}] & \text{if $l$ is  odd} \\
 \Z/p[p_1,\cdots,p_{\frac{l-2}{2}},e_l] & \text{if $l$ is even}.
\end{cases}\] 
One deduces 
\begin{myeq}\label{R-Stief-odd} 
H^*(V_k\R^l;\Z/p) \cong 
\begin{cases}
 \Lambda (x_{\frac{l-k+2}{2}},\cdots,x_{\frac{l-1}{2}},\sigma_{l-k}) & \mbox{if } k \mbox{ is odd}, l \mbox{ is odd} \\
\Lambda(x_{\frac{l-k+1}{2}},\cdots,x_{\frac{l-1}{2}}) & \mbox{if } k \mbox{ is even}, l \mbox{ is odd} \\
\Lambda(x_{\frac{l-k+1}{2}},\cdots,x_{\frac{l-2}{2}},\epsilon_l) & \mbox{if } k \mbox{ is odd}, l \mbox{ is even} \\
 \Lambda(x_{\frac{l-k+2}{2}},\cdots,x_{\frac{l-2}{2}},\sigma_{l-k},\epsilon_l) & \mbox{if } k \mbox{ is even}, l \mbox{ is even} 
\end{cases}
\end{myeq}
where $|\sigma_j|=j$. The spectral sequence computation in cases of interest is described in the Proposition below. 
\begin{prop}\label{p1}
Assume that $k$ is odd. For the spectral sequence associated to $V_k(\R^l) \to \tilde{Gr}_k(\R^l) \to BSO(k)$, the classes $x_i$  are transgressive and $d_{4i} (x_i)=-p_i'$ where $p_i'$ are defined by the equation $(1+p_1'+\cdots )(1+p_1+\cdots p_k)=1$. The classes $\sigma_{l-k}$ (for $l$ odd) and $\ep_l$ (for $l$ even) are permanent cycles.  
\end{prop}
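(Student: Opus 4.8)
The plan is to run the Serre spectral sequence of the fibration $V_k(\R^l) \to \tilde{Gr}_k(\R^l) \to BSO(k)$ with $\Z/p$-coefficients, $p$ odd, and to pin down the differentials by comparison with auxiliary fibrations whose spectral sequences are understood. First I would recall the bottom fibration $U_\R(l)=O(l)\to Gr_k(\R^l)\to B(O(k)\times O(l-k))$ and its oriented refinement $SO(l)\to \tilde{Gr}_k(\R^l)\to B(SO(k)\times SO(l-k))$; composing with the projection to the first factor gives the fibration in the statement, exactly as in diagram \eqref{fibseqGr}. Since $k$ is odd, $H^*(BSO(k);\Z/p)=\Z/p[p_1,\dots,p_{(k-1)/2}]$ is a polynomial algebra on the Pontrjagin classes and in particular is concentrated in even total degree, so the spectral sequence has the rows $E_2^{*,q}$ alternating and every odd-degree generator of the fibre is a candidate transgression target only in even columns — this forces each $x_i$ (of degree $4i-1$) to be transgressive, i.e.\ $d_r(x_i)=0$ for $r<4i$ and $d_{4i}(x_i)\in H^{4i}(BSO(k))$.

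The next step is to identify $d_{4i}(x_i)$. I would use the classical computation for $SO(l)\to BSO(l-k)\times\{*\}$, or more precisely the fibration $SO(l)\to SO(l)/SO(l-k)=V_k(\R^l)$ and the pair of classifying-space fibrations; the point is that in $H^*(BSO(l);\Z/p)$ the total Pontrjagin class factors as $p(BSO(l))=p(BSO(k))\cdot p(BSO(l-k))$, and the classes $x_i$ with $i\le (l-1)/2$ that survive to the fibre $V_k\R^l$ (those enumerated in \eqref{R-Stief-odd}) transgress in the universal $SO(k)$-fibration to the components of the inverse total class. This is precisely the relation $(1+p_1'+\cdots)(1+p_1+\cdots+p_k)=1$ defining $p_i'$, so naturality of the spectral sequence along $BSO(k)\to BSO(l)$ (more precisely, along the map of fibrations induced by $SO(l-k)\hookrightarrow SO(l)$) gives $d_{4i}(x_i)=-p_i'$. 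The sign is exactly the sign in the defining equation and matches Propositions \ref{C-stief}, \ref{H-stief}, \ref{R-stief-2}.

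Finally I would treat the extra generator: $\sigma_{l-k}$ (of degree $l-k$, present when $l$ is odd, $k$ odd, so $l-k$ even) and $\epsilon_l$ (of degree $l-1$, present when $l$ is even, $k$ odd). These come from the Euler class of the $(l-k)$-plane bundle, respectively from the Euler class in $H^*(SO(l))$; in the fibration at hand the base is $BSO(k)$, not $BSO(l-k)$, so the relevant Euler class is pulled back from a \emph{trivial} factor and there is simply nothing in $H^{l-k}(BSO(k);\Z/p)$ or $H^{l-1}(BSO(k);\Z/p)$ of the right form for these classes to hit, while by dimension (and the multiplicative structure, since $x_i\cdot\sigma_{l-k}$ etc.\ must also survive) they cannot support a differential either; hence they are permanent cycles. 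The main obstacle is the careful bookkeeping in this last step — verifying that $\sigma_{l-k}$ and $\epsilon_l$ genuinely transgress trivially rather than merely having no room in low range — which I would resolve by comparing with the fibration over $B(SO(k)\times SO(l-k))$, where $\sigma_{l-k}$ transgresses to the Euler class $e_{l-k}$ (resp.\ the analogous class), and then observing that restriction along $BSO(k)\to B(SO(k)\times SO(l-k))$ (the inclusion of the first factor) kills that Euler class, so its preimage $\sigma_{l-k}$ must be a permanent cycle in the smaller spectral sequence.
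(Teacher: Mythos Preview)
Your overall strategy---comparison via the diagram \eqref{fibseqGr}---is exactly the paper's. For the $x_i$ the argument is the same, and for $\ep_l$ your restriction argument works (the paper phrases it slightly differently: $d_l(\ep_l)=e(\gamma_k\oplus\gamma_{l-k})$ in the larger fibration, and this vanishes because $k$ is odd so $e(\gamma_k)$ is $2$-torsion hence zero mod $p$; your restriction-to-first-factor reasoning reaches the same conclusion).

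The genuine gap is your treatment of $\sigma_{l-k}$. Your fallback plan is to compare with the fibration over $B(SO(k)\times SO(l-k))$ and claim that $\sigma_{l-k}$ transgresses there to $e_{l-k}$. But the fibre of that fibration is $SO(l)$ with $l$ odd, whose mod-$p$ cohomology is $\Lambda(x_1,\dots,x_{(l-1)/2})$---an exterior algebra on odd-degree generators only. There is no even-degree generator to play the role of $\sigma_{l-k}$ (which has degree $l-k$, even); under $q^*$ it becomes zero or decomposable, and in either case the comparison map gives you no control over its differential in the fibration over $BSO(k)$. The paper's argument is simpler and is essentially what you sketched first before retreating to the comparison: $\sigma_{l-k}$ is the lowest-degree class in $H^*(V_k\R^l;\Z/p)$, hence automatically transgressive, and its transgression lands in $H^{l-k+1}(BSO(k);\Z/p)$---note the degree shift by one, which you omitted when you wrote $H^{l-k}$ and $H^{l-1}$---which vanishes because $l-k+1$ is odd and $H^*(BSO(k);\Z/p)$ lives in even degrees. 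Correct the degree and drop the comparison for this class, and you recover the paper's proof.
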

\begin{proof} 
The first part of the proof follows exactly analogously as in \cite{BaSu2017}(Proposition 2.1). We write down the details in the case $l$ is even while the odd case is similar. In the spectral sequence for fibration 
\[ SO(l) \to \tilde{Gr}_k\R^l \to B(SO(k)\times SO(l-k)), \] 
the differentials are $d(x_i)=p_i(\gamma_k\oplus\gamma_{l-k})$ \cite{Bor53}. We now use the diagram of fibrations \eqref{fibseqGr}, noting that for $i>\frac{l-k}{2}$,  the classes $x_i$ pull back to the classes with the same notation under $q^\ast$ (where $q: SO(l) \to V_k(\R^l)$). We write $p_i$ for $p_i(\gamma_k)$ and $\tilde{p}_i$ for $p_i(\gamma_{l-k})$, so that, $p_i=0$ if $i>\frac{k}{2}$ and $\tilde{p}_j=0$ if $j>\frac{l-k}{2}$. The formula $d(x_i)=p_i(\gamma_k\oplus\gamma_{l-k})$ implies that in the $(2(l-k)+1)$-page, $\tilde{p}_i$ is equivalent to $p_i'$ for $i\leq\frac{l-k}{2}$. Thus
\begin{align*}
 d_{2(l-k+1)}(x_{\frac{l-k+1}{2}}) &= p_{\frac{l-k+1}{2}}(\gamma_k \oplus \gamma_{l-k})\\             
              &= \sum_{i+j=\frac{l-k+1}{2}} p_i\tilde{p}_j \\
              &= \sum_{i+j=\frac{l-k+1}{2},i\geq 1} p_ip_j' + \tilde{p}_{\frac{l-k+1}{2}} \\
              &= \sum_{i+j=\frac{l-k+1}{2},i\geq1} p_ip_j' \\
              &= -p_{\frac{l-k+1}{2}}'.\\        
\end{align*} 

It now remains to prove that the classes $\sigma_{l-k}$ and $\ep_l$ are permanent cycles in the relevant cases. 
When $l-k$ is even, the class $\sigma_{l-k}$ is the lowest degree non-trivial class which is transgressive for degree reasons. Further it's degree is even, which forces it to transgress to zero as the $\Z/p$-cohomology groups in $BSO(k)$ are zero in odd degrees.

For $l$ even, it follows by examining the fibration $SO(l)\to \tilde{Gr}_k(\R^l) \to BSO(k)\times BSO(l-k)$ that $\ep_l$ is transgressive, and $d_l(\ep_l)$ is the Euler class of $\gamma_k \oplus \gamma_{l-k}$. This is zero as $k$ is odd.  
\end{proof}

In order to compute with the Serre spectral sequence $St_k(\K^l) \to St_k(\K^l)_{hG} \to BG$ via the map of fibrations \eqref{fibseqG}, we note that Propositions \ref{C-stief}, \ref{H-stief}, \ref{R-stief-2}, \ref{p1} imply that the differentials are described by values of certain characteristic classes of the universal bundles. These universal bundles are the associated bundle over $BU_\K(k)$ induced by the action of $U_\K(k)$ on $\K^k$. Therefore, the differentials for $St_k(\K^l) \to St_k(\K^l)_{hG} \to BG$ are determined by the characteristic classes of the associated $G$-representation. 

\section{Index computations}\label{indcomp}

We compute the index of the Stiefel manifolds $St_k(\K^l)$ for the groups  $C_p^n$. The actions which we consider are induced by homomorphisms 
$$ C_p^n \to \Sigma_{p^n}\subset U_\K(p^n),$$
where $C_p$ is included as the cyclic subgroup generated by a $p$-cycle, and for $n\geq 2$, $C_{p^n}$ is included as the transitive action on itself by left multiplication. In either case, note that the induced representation on the groups $C_p^n\to GL_\K(p^n)$ is the regular representation over $\K$.

\subsection{Computations at odd primes}
  We start by assuming that $p$ is odd and that the group $G=C_p$. We also first consider the case $\K=\R$. The cohomology of $BC_p$ is given by 
  \[H^*(BC_p;\Z/p)\cong \Z/p[v]\otimes\Lambda_{\Z/p}[u]\]
   where $|u|=1, |v|=2$. 
  
 \begin{theorem}\label{indV}
Let  $m=\ceil{\frac{l}{p-1}} - 1$. Then,   $\Index_{C_p}V_p\R^l$ is the ideal $\langle v^{m(p-1)}\rangle $. 
 \end{theorem}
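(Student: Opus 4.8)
The plan is to run the Serre spectral sequence for the fibration $V_p\R^l \to (V_p\R^l)_{hC_p} \to BC_p$ using the map of fibrations \eqref{fibseqG}, exactly as set up in Section \ref{hom}. Since $p$ is odd we work with $\Z/p$-coefficients and, because the $C_p$-action on $p$ vectors factors through $\Sigma_p \subset SO(p)$ (as $p$ is odd), we may use the oriented version, i.e.\ compare with $V_p\R^l \to \tilde{Gr}_p(\R^l) \to BSO(p)$ and Proposition \ref{p1}. The key input is that the differentials in our spectral sequence are governed by the images in $H^\ast(BC_p;\Z/p)$ of the characteristic classes $p_i'$ (and the Euler-type class) of the universal bundle over $BSO(p)$, pulled back along $BC_p \to BSO(p)$; and this pulled-back bundle is the bundle associated to the reduced regular representation $\bar\rho_{C_p}$ of $C_p$. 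So the first step is to identify $p_i(\bar\rho_{C_p}) \in H^{4i}(BC_p;\Z/p)$.

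The second step is that computation of Pontryagin classes. As a real representation $\bar\rho_{C_p} = r(\xi)\oplus r(\xi^2)\oplus\cdots\oplus r(\xi^{(p-1)/2})$, a sum of $(p-1)/2$ two-dimensional pieces; the associated bundle over $BC_p$ is a sum of complex line bundles whose Euler classes are the nonzero multiples $jv$, $1\le j\le (p-1)/2$, of the generator $v$. Hence the total Pontryagin class is $\prod_{j=1}^{(p-1)/2}(1+j^2v^2)$, and since the $j^2$ for $1\le j\le (p-1)/2$ run over all the nonzero squares mod $p$, we get $p(\bar\rho_{C_p}) = \prod_{a\in (\Z/p)^\ast/\pm}(1+av^2)$. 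The inverses $p_i'$ are then determined; the upshot I expect is that the classes $p_i(\bar\rho_{C_p})$, hence the $p_i'$, are polynomials in $v^2$, and that the lowest-degree surviving obstruction is a power of $v$. One then feeds this into Proposition \ref{p1}: when $l$ is odd the extra generator $\sigma_{l-k}$ (here $k=p$) is a permanent cycle of degree $l-p$, and when $l$ is even $\epsilon_l$ is a permanent cycle; in both cases the transgressions of the $x_i$'s are controlled by the $p_i'$.

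The third step is the degree bookkeeping that produces the exponent $m(p-1)$ with $m = \ceil{\frac{l}{p-1}}-1$. The number of exterior generators of $H^\ast(V_p\R^l;\Z/p)$ that transgress via Pontryagin classes is roughly $\lfloor \tfrac{p-1}{2}\rfloor$ — but crucially each carries a differential which is (to leading order) a \emph{single} new power of $v^2$ times a unit, because $p(\bar\rho_{C_p})$ has the special multiplicative form above. Killing these generators one at a time multiplies the ``surviving power of $v$'' and the computation collapses: the spectral sequence has a single genuinely new transgression at each stage, and the product of all the differentials shows that $v^N$ first becomes a boundary exactly when $N = m(p-1)$, where $m(p-1)$ is the first multiple of $p-1$ that is $\ge l$ (equivalently $m=\ceil{l/(p-1)}-1$ counts the number of $\xi^j$-summands, over all $j$, appearing in the first $l$ vectors of the regular representation, cf.\ the localization/transgression principle right after Proposition \ref{1P}). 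To pin the ideal down as \emph{exactly} $\langle v^{m(p-1)}\rangle$ and not smaller, one uses Proposition \ref{1P}: $V_p\R^l$ is a fixed-point-free finite $C_p$-CW-complex, so the index is nonzero, and $(V_p\R^{l'})$ for $l' \le l$ maps to... — more precisely, monotonicity against the sub-Stiefel manifolds and the representation-sphere computation $\Index_{C_p}(S(V)) = \langle v^{\dim V/2}\rangle$ give matching upper and lower bounds on the lowest surviving power of $v$, and the ring structure of $H^\ast(BC_p;\Z/p)/(u^2)$ forces the index to be the principal ideal generated by that power.

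The main obstacle I expect is the middle step: correctly computing the inverse Pontryagin classes $p_i'$ of $\bar\rho_{C_p}$ over $\Z/p$ and extracting from the resulting — a priori messy — system of differentials the clean fact that only one new power of $v$ is introduced per transgressing generator, so that the total effect is precisely $v^{m(p-1)}$. Handling the parity cases ($l,k$ even/odd in \eqref{R-Stief-odd}) and the role of $\sigma_{l-k}$ and $\epsilon_l$ as permanent cycles is a secondary bookkeeping nuisance but, by Proposition \ref{p1}, not a real difficulty.
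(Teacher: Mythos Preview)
Your setup is right --- compare with $\tilde{Gr}_p(\R^l)\to BSO(p)$ via \eqref{fibseqG} and use Proposition \ref{p1} --- but the heart of the argument is missing, and your description of the mechanism in step three is not what actually happens.

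The point you do not carry out is that the product $\prod_{j=1}^{(p-1)/2}(1-j^2v^2)$ collapses to a two-term polynomial. The squares $1^2,\dots,\big(\tfrac{p-1}{2}\big)^2$ are exactly the quadratic residues in $(\Z/p)^\times$, i.e.\ the roots of $x^{(p-1)/2}-1$; hence the elementary symmetric functions in them vanish except at the top, and
\[
p(V)=\prod_{j=1}^{(p-1)/2}(1-j^2v^2)=1-(-1)^{(p-1)/2}v^{p-1},\qquad p'(V)=1+\sum_{k\ge1}\pm v^{k(p-1)}.
\]
So $p_j'(V)=0$ unless $2j$ is a multiple of $p-1$. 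This is the entire content of the computation: most transgressions $d_{4j}(x_j)=-p_j'(V)$ are \emph{zero}, not nonzero-but-accumulating. Among the exterior generators of $H^\ast(V_p\R^l;\Z/p)$ there is exactly one $x_j$ with $2j$ a multiple of $p-1$, namely $2j=m(p-1)$; its transgression is $\pm v^{m(p-1)}$, and that single differential generates the index.

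Your picture of ``killing generators one at a time, multiplying up the surviving power of $v$'' is therefore wrong, and the appeal to monotonicity against sub-Stiefel manifolds and representation spheres is unnecessary. Also, your characterization of $m(p-1)$ is off: it is the largest multiple of $p-1$ strictly \emph{below} $l$ (equivalently $m(p-1)<l\le(m+1)(p-1)$), not the first one $\ge l$. Once you have the two-term formula for $p(V)$, the degree bookkeeping is a one-line check rather than the ``messy system'' you anticipate.
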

 \begin{proof} Consider the fibration $  V_{p}\R^l \to (V_{p}\R^l)_{hC_p} \to BC_p$. The associated spectral sequence has  $E_2$ page
 \[
 E_2^{s,t}(V_p\R^l)= H^s(BC_p;H^t(V_p\R^l;\Z/p)) \Rightarrow H^{s+t}((V_p\R^l)_{hC_p};\Z/p).
 \] 
 The $C_p$-action on $H^t(V_p\R^l;\Z/p)$ is easily observed to be trivial. For, $C_p$ acts on $V_p\R^l$ via the inclusion $C_p \subset SO(p)$, and the latter action on $V_p\R^l$ as the principal $SO(p)$ bundle $V_p\R^l \to \tilde{Gr}_p\R^l$. Now as $SO(p)$ is connected, the action of $C_p$ is through maps which are homotopic to the identity. Therefore we may write 
 \[
 E_2^{s,t}(V_p\R^l)=H^s(BC_p) \otimes H^t(V_p\R^l) \Rightarrow H^{s+t}((V_p\R^l)_{hC_p};\Z/p).
 \]
Recall that the cohomology of the Stiefel manifold is given by \eqref{R-Stief-odd}.

We note that the universal $p$-bundle over $BSO(p)$ pulls back to the bundle $V$ over $BC_p$ which is associated to the real regular representation of $C_p$.  Therefore, as bundles 
$$V=\oplus_{i=1}^{\frac{p-1}{2}} r(\xi^{i}) \oplus \ep_{\R}.$$ 
 We have the following formulae for Pontrjagin classes,
\[
 p(\xi)=1-v^2, \quad   p(\xi^i)=(1-(iv)^2)\]
\[
\Rightarrow p(V)= \prod_{i=1}^\frac{p-1}{2}(1-i^2v^2).
\] \\
Therefore $p_k(V)= (-1)^k\sigma_kv^{2k}$, where $\sigma_k$ is the $k$-th elementary symmetric polynomial in $\{ i^2|~ i=1,\cdots,\frac{p-1}{2}\}$. We note that in $\Z/p$ this set is precisely the set of quadratic residues, and hence are the $\frac{p-1}{2}$ zeroes of the polynomial $x^{\frac{p-1}{2}}-1$. Hence, $\sigma_k=0$ for $k=1,\cdots,\frac{p-1}{2}$ and $\sigma_\frac{p-1}{2}=1$, and this implies $p(V)=1-(-1)^{\frac{p-1}{2}}v^{p-1}$. 

The diagram \eqref{fibseqG} yields  the following commutative diagram \\
\[
\xymatrix{
     V_{p}\R^l \ar@{=}[rr] \ar[d] & & V_{p}\R^l \ar[d] \\
     (V_{p}\R^l)_{hC_p} \ar[rr] \ar[d] & & \tilde{Gr}_{p}(\R^l) \ar[d] \\
     BC_p \ar[rr] & & BSO(p). }
\] 
 The commutative diagaram allows us to read off the differentials in the spectral sequence for
\begin{myeq}\label{D5}
V_{p}\R^l \to (V_{p}\R^l)_{hC_p} \to BC_p 
\end{myeq}
from the Serre spectral sequence for 
\[
V_{p}\R^l \to \tilde{Gr}_p(\R^l) \to BSO(p). 
\]
The differentials for the associated Serre spectral sequence for the fibration (\ref{D5}) are induced by according to Proposition (\ref{p1}) 
\[ d_{4i}(x_i)=-p_i', \quad d_j(\epsilon_l)=0,\quad \mbox{and }\quad d_j(\sigma_{l-p})=0. \] 
It follows that in \eqref{D5}, $x_i$, $\ep_l$ and $\sigma_{l-p}$ are transgressive, and they  satisfy 
\[ d_{4i}(x_i)=-p_i'(V), \quad d_j(\epsilon_l)=0,\quad \mbox{and }\quad d_j(x_{l-p})=0, \] 
where the classes $p'(V)$ are defned by the equation $p(V)p'(V)=1$. We have computed that 
\[p(V)=1-\pm v^{p-1}, \] 
which implies
\[
p'(V) = 1+\sum_{k\geq 1}\pm v^{k(p-1)}. 
\]
Note that whenever $l\geq p$ and $m(p-1)< l \leq (m+1)(p-1)$ there is exactly one $x_{\frac{i}{2}}$ in $H^*(V_p\R^l;\Z/p)$ such that $i$ is divisible by $p-1$, and we have $\frac{i}{p-1}=m$. Therefore, the first non-trivial differential in \eqref{D5} is 
\[ d_{2m(p-1)}(x_{m(\frac{p-1}{2})})=\pm v^{m(p-1)}. \] 
Hence, $\Index_{C_p}V_p{\R^l}$ is the ideal generated by $v^{m(p-1)}$.
\end{proof}

Next we consider $\K=\C$, and observe that the computation is the same as the real case. 
\begin{theorem}\label{indW}
Let  $m=\ceil{\frac{l}{p-1}}-1$. Then,   $\Index_{C_p}W_{l,p}$ is the ideal $\langle v^{m(p-1)}\rangle $. 
\end{theorem}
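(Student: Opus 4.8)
The plan is to mimic the proof of Theorem \ref{indV} almost verbatim, replacing the real Grassmannian machinery with its complex counterpart from Proposition \ref{C-stief}. First I would set up the fibration $W_{l,p} \to (W_{l,p})_{hC_p} \to BC_p$ and its Serre spectral sequence with $\Z/p$-coefficients. As in the real case, the $C_p$-action on $H^\ast(W_{l,p};\Z/p)$ is trivial: $C_p$ acts through $C_p \subset U(p)$, and since $U(p)$ is connected this action is homotopic to the identity, so the $E_2$-page splits as $H^s(BC_p)\otimes H^t(W_{l,p})$ with $H^\ast(W_{l,p})=\Lambda(y_{l-p+1},\dots,y_l)$, $|y_j|=2j-1$.

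Next I would identify the universal bundle. The universal $p$-bundle over $BU(p)$ pulls back along $BC_p \to BU(p)$ to the complex bundle associated to the regular \emph{complex} representation of $C_p$, namely $\xi^0 \oplus \xi \oplus \cdots \oplus \xi^{p-1}$ (here $\xi^0$ is trivial). Using $c(\xi^i) = 1 + i v$ one gets $c(V) = \prod_{i=0}^{p-1}(1+iv) = \prod_{i=1}^{p-1}(1+iv)$, and since $\{1,\dots,p-1\}$ are exactly the nonzero elements of $\Z/p$, i.e. the roots of $x^{p-1}-1$, this product equals $1 - v^{p-1}$ in $\Z/p[v]$. Hence $c'(V) = 1 + \sum_{k\geq 1} v^{k(p-1)}$ where $c(V)c'(V)=1$.

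Then, via the diagram \eqref{fibseqG} specialized to $\K=\C$, I would transport the differentials from Proposition \ref{C-stief}: each $y_j$ is transgressive in the fibration $W_{l,p} \to (W_{l,p})_{hC_p} \to BC_p$ with $d_{2j}(y_j) = -c_j'(V)$. Since $c_j'(V)$ is a nonzero multiple of $v^{j}$ precisely when $p-1 \mid j$ (and zero otherwise), and since $H^\ast(W_{l,p})$ contains $y_j$ exactly for $l-p+1 \leq j \leq l$, there is a unique $j$ in this range divisible by $p-1$, namely $j = m(p-1)$ with $m = \ceil{\frac{l}{p-1}}-1$ (valid because $l \geq p$ forces the range $[l-p+1,l]$ to have length $p-1 \geq p-1$ and to contain a multiple of $p-1$, with quotient $m$). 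The first nontrivial transgression is therefore $d_{2m(p-1)}(y_{m(p-1)}) = \pm v^{m(p-1)}$, so $\Index_{C_p}W_{l,p} = \langle v^{m(p-1)}\rangle$.

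I expect no serious obstacle here since the argument is structurally identical to Theorem \ref{indV}; the only points needing care are (i) confirming that the complex regular representation (rather than its real form) is the relevant bundle, so that the Chern-class computation $c(V)=1-v^{p-1}$ replaces the Pontrjagin computation $p(V)=1\mp v^{p-1}$, and the transgression lengths halve accordingly ($d_{2j}$ versus $d_{4i}$), and (ii) checking the degree bookkeeping that places exactly one generator $y_j$ at a degree making $c_j'(V)\neq 0$. Both are routine once the setup is in place.
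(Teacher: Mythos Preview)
Your proposal is correct and follows essentially the same approach as the paper's proof, which also reduces to Theorem \ref{indV} via Proposition \ref{C-stief} and the Chern class computation for the complex regular representation. The only cosmetic difference is that you compute $c(V)=1-v^{p-1}$ directly from the factorization $x^{p-1}-1=\prod_{i=1}^{p-1}(x-i)$, whereas the paper pairs $(1+iv)(1-iv)=1-i^2v^2$ and invokes the quadratic-residue argument from Theorem \ref{indV}; your route is slightly cleaner but the content is identical.
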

\begin{proof}
Proceeding analogously as in Theorem \ref{indV}, we observe that the representation induced by the inclusion $C_p \to U(p)$ is the regular representation. Therefore, the canonical $p$-plane bundle pulls back to the complex vector bundle $W$ over $BC_p$ induced by the regular representation. The differentials in the spectral sequence for 
\[ 
W_{p}\C^l \to (W_{p}\C^l)_{hC_p} \to BC_p
\]
are determined by $c'(W)$. Note that  $W=\oplus_{i=1}^{p-1}\xi^i \oplus \epsilon_{\C}$. We thus have,
\[c_1(\xi^i)=iv, ~ c(W)=\prod_{i=1}^{p-1}(1+iv).\]
\[\implies c(W)=\prod_{i=1}^{\frac{p-1}{2}}(1-i^2v^2)=1-(-1)^{\frac{p-1}{2}}v^{p-1}.\]
\[\implies c'(W) = 1+\sum_{k\geq 1} \pm v^{k(p-1)}. \]
The arguments of Theorem \ref{indV} now imply $\Index_{C_p}(W_{l,p})=\langle v^{m(p-1)}\rangle$. 
\end{proof}
 
Finally when $\K=\Hyp$, the index is described below.  
\begin{theorem}\label{indX}
Let  $m$ be defined by 
$$m= \begin{cases} \ceil{\frac{2l}{p-1}} -2 & \mbox{if } p\nmid \ceil{\frac{2l}{p-1}} -1 \\ 
                              \ceil{\frac{2l}{p-1}} -1 & \mbox{otherwise}. \end{cases} $$
 Then,   $\Index_{C_p}X_{l,p}$ is the ideal $\langle v^{m(p-1)}\rangle $. 
\end{theorem}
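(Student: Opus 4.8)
The plan is to imitate the proofs of Theorems~\ref{indV} and~\ref{indW}, replacing Chern classes by symplectic Pontryagin classes; the only new ingredient is the computation of $q'(W)$ for the bundle $W$ over $BC_p$ coming from the quaternionic regular representation, and the new phenomenon is that its coefficients can vanish modulo $p$, which is exactly what produces the case division in the value of $m$.

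First I would set up the fibration $X_{l,p}\to(X_{l,p})_{hC_p}\to BC_p$ and compare it, via \eqref{fibseqG}, with $X_{l,p}\to G_p(\Hyp^l)\to BSp(p)$. Since $C_p$ acts through $C_p\to\Sigma_p\subset Sp(p)$ and $Sp(p)$ is connected, the action on $H^\ast(X_{l,p};\Z/p)$ is trivial, exactly as argued in Theorem~\ref{indV}, so the $E_2$-page is $H^\ast(BC_p)\otimes\Lambda(z_{l-p+1},\dots,z_l)$ with $|z_j|=4j-1$ by Proposition~\ref{H-stief}. The universal $Sp(p)$-bundle pulls back over $BC_p$ to the bundle $W$ associated to $\Hyp^p$ with cyclic permutation of coordinates, and Proposition~\ref{H-stief} gives that the $z_j$ are transgressive with $d_{4j}(z_j)=-q_j'(W)$, where $q(W)q'(W)=1$.

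The key step is to compute $q(W)$. I would use the splitting $\Hyp^p=\Hyp\otimes_\R\R[C_p]=\Hyp\oplus\bigoplus_{i=1}^{(p-1)/2}\bigl(\Hyp\otimes_\R r(\xi^i)\bigr)$. The underlying complex bundle of $\Hyp\otimes_\R r(\xi^i)$ is $2(\xi^i\oplus\xi^{-i})$, so — its symplectic Pontryagin class being the even part of the Chern class of the underlying complex bundle — one gets $q(\Hyp\otimes_\R r(\xi^i))=(1-i^2v^2)^2$. Multiplying and using, as in Theorem~\ref{indV}, that $\{i^2:1\le i\le\tfrac{p-1}2\}$ is the set of quadratic residues mod $p$, i.e.\ the roots of $x^{(p-1)/2}-1$, yields
\[ q(W)=\Bigl(\prod_{i=1}^{(p-1)/2}(1-i^2v^2)\Bigr)^2=\bigl(1-(-1)^{(p-1)/2}v^{p-1}\bigr)^2, \]
and hence
\[ q'(W)=\bigl(1-(-1)^{(p-1)/2}v^{p-1}\bigr)^{-2}=\sum_{k\ge 0}(k+1)(-1)^{k(p-1)/2}\,v^{k(p-1)}. \]
Thus $q_j'(W)$ is a unit multiple of $v^{k(p-1)}$ exactly when $j=\tfrac{k(p-1)}2$ and $p\nmid k+1$, and is $0$ otherwise.

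Finally I would extract the index. Since $q_j'(W)=0$ makes $z_j$ a permanent cycle, the first nontrivial differential in the spectral sequence is the transgression $d_{2k(p-1)}\bigl(z_{k(p-1)/2}\bigr)=\pm(k+1)v^{k(p-1)}$ for the smallest $k$ with $p\nmid k+1$ and $l-p+1\le\tfrac{k(p-1)}2\le l$, i.e.\ $\tfrac{2l}{p-1}-2\le k\le\tfrac{2l}{p-1}$. The least integer in this length-$2$ interval is $\ceil{\tfrac{2l}{p-1}}-2$; if $p\nmid\ceil{\tfrac{2l}{p-1}}-1$ this value works, and otherwise $p\mid\ceil{\tfrac{2l}{p-1}}-1$ forces $p\nmid\ceil{\tfrac{2l}{p-1}}$, so the next integer $\ceil{\tfrac{2l}{p-1}}-1$ (still in the interval) works; in both cases $k=m$. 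A degree count then closes the argument exactly as in Theorems~\ref{indV} and~\ref{indW}: no class of $H^\ast(BC_p)$ in degree $<2m(p-1)$ can be hit (the source of such a differential lies in negative base degree), while $v^{m(p-1)}$ is hit, so $\Index_{C_p}X_{l,p}$ equals the ideal $\langle v^{m(p-1)}\rangle$. The main obstacle is the characteristic-class bookkeeping in the third paragraph — in particular fixing the normalization of the symplectic Pontryagin classes so that $q(W)=\bigl(1-(-1)^{(p-1)/2}v^{p-1}\bigr)^2$ — since the appearance of a square here, together with the possible vanishing of $k+1$ modulo $p$, is the one essential difference from the real and complex cases.
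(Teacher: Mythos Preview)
Your proof is correct and follows essentially the same approach as the paper: both set up the comparison of fibrations \eqref{fibseqG}, identify the relevant bundle as the quaternionic regular representation, compute $q(W)=\bigl(1-(-1)^{(p-1)/2}v^{p-1}\bigr)^2$ (the paper does this in one line by noting that the underlying complex bundle is two copies of the complex regular representation, while you decompose into the pieces $\Hyp\otimes_\R r(\xi^i)$, but the outcome is the same), invert to get $q'(W)=\sum_{k\ge 0}(k+1)(-1)^{k(p-1)/2}v^{k(p-1)}$, and then read off the smallest admissible $k$ exactly as you do. Your case analysis for $m$ is in fact spelled out more carefully than in the paper.
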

\begin{proof}
The quaternionic bundle $U$ over $BC_p$ induced by the map $C_p\to Sp(p)$ comes from the quaternionic regular representation. As the symplectic Pontrjagin classes are up to sign the even Chern classes of the underlying complex bundle, which for $U$ is two copies of the regular representation, we have 
$$q(U)=  (1- (-1)^{\frac{p-1}{2}} v^{p-1})^2 $$
$$\implies q'(U) = (1+ \sum_{k\geq 1} (-1)^{\frac{k(p-1)}{2}} v^{k(p-1)})^2=   1+ \sum_{k\geq 1} (-1)^{\frac{k(p-1)}{2}}(k+1) v^{k(p-1)}. $$
As in Theorems \ref{indV} and \ref{indW}, the index is generated by $v^{k(p-1)}$ where $k$ is the smallest integer between $2(l-p+1)$ and $2l$ which is a multiple of $(p-1)$ and $p\nmid k+1$. This is precisely the $m$ described in the statement. 
\end{proof}
 
\bigskip

We next switch out attention to  the elementary Abelian groups $C_p^n$ for $n\geq 2$. Our main feature in these computations is to prove certain bounds on the index which turn out to be useful in the examples in the following section. Recall that the group cohomology may be described by 
$$H^\ast (BC_p^n; \Z/p) \cong \Z/p[v_1,\cdots, v_n]\otimes \Lambda_{\Z/p}(u_1,\cdots, u_n)$$
with $|u_i|=1$ and $|v_j|=2$.  We prove that the index is contained in the ideal generated by $(p-1)$-powers of the $v_j$.  
We first prove the following useful proposition.
\begin{prop}\label{p3}
Suppose $A_k$ is the set of all linear polynomials in $\Z/p[x_1,\cdots,x_k]$. Then,
\[
1-\prod_{l\in A_k}(1+l) = (-1)^{k-1}\sum_{i=0}^{k-1}(-1)^iQ_{k,i}.
\] 
Further, the Dickson polynomials $Q_{k,i}$ belong to the ideal $I_{p-1}(k):=\langle x_1^{p-1}, \cdots, x_k^{p-1}\rangle$.
\end{prop}
\begin{proof}
First let us examine the expression 
\[1-\prod_{l\in A_k}(1+l)\] 
which is related to the Chern classes of complex regular representation of $C_p^k$\cite[Eq. 3.2, Th. 6.2]{MM82}. So we can rewrite the expression as \begin{align*}1-\prod_{l\in A_k}(1+l)=&1-c(\rho_{C_p^k})\\
=&(-1)^{k-1}\sum_{i=0}^{k-1}(-1)^i Q_{k,i}.
\end{align*}

For the remaining statement, it suffices to prove that $1-\prod_{l\in A_k}(1+l)$ belongs to the ideal $I_{p-1}(k)$.  We prove this by induction on $k$. For $k=1$, we have
\[
\prod_{a \in \Z/p} (1+ax)= \prod_{a \in (\Z/p)^\times}a\prod_{a \in (\Z/p)^\times}(a^{-1}+x)=-(x^{p-1}-1),
\]
which verifies the proposition in this case. Assuming the result for $k-1$, we write 
\[
\prod_{l\in A_k}(1+l)=\prod_{l\in A_{k-1}}(1+l)\prod_{a \in (\Z/p)^\times}(1+l+ax_k).
\]
Now we use
\[
\prod_{a \in \Z/p}(z+ax) = \prod_{a \in \Z/p}(z-ax)=x^p\prod_{a \in \Z/p}(z/x-a)=x^p((z/x)^p-z/x)=z^p-zx^{p-1}=z(z^{p-1}-x^{p-1}).
\]
This allows us to simplify the above as 
\[
\prod_{l\in A_{k-1}}\prod_{a \in \Z/p}(1+l+ax^k)= \prod_{l\in A_{k-1}}(1+l)((1+l)^{p-1}-x_k^{p-1}).
\]
Thus, modulo $I_{p-1}(k)$ the expression is equivalent to  $(\prod_{l \in A_{k-1}}(1+l))^p$. By induction hypothesis, the latter product is congruent to $1$ modulo $I_{p-1}(k-1)$. 
%
\end{proof}

We now return to the index computation of Stiefel manifolds starting with the real case below. For this, define the polynomials $S_i \in \Z/p[v_1,\cdots, v_n] \subset H^\ast(BC_p^n)$ by the equation 
\begin{myeq}\label{Seqn} 
(1+S_1+S_2+\cdots) (1+\sum_{i=0}^{n-1} (-1)^{n+i} Q_{n,i})^2 = 1 
\end{myeq}
with $|S_i|=4i$.   
\begin{theorem}\label{indV1}
The Fadell-Husseini index of $V_{p^n}\R^l$ is given by the formula 
\[\Index_{C_p^n}V_{p^n}\R^l= \langle S_i \mid \frac{l-p^n+1}{2}\leq i \leq \frac{l-1}{2}\rangle.\]
 The first non-trivial element of the index must lie in degree a multiple of $2(p-1)$.
\end{theorem}
\begin{proof}
 The second statement follows from the first and the formula for the degree of $Q_{k,i}$. Proceeding as in Theorem \ref{indV}, we compute the spectral sequence associated to the fibration  
\begin{myeq} \label{specVpn}
 V_{p^n}\R^l \to  (V_{p^n}\R^l)_{hC_p^n} \to BC_p^n, 
 \end{myeq}
which is related to the Pontrjagin classes of the associated bundle $V$ to the regular representation over $C_p^n$. We thus have 
$$V=r(\sum_{1\leq i_j \leq p} \xi_1^{i_1} \otimes \cdots \otimes \xi_n^{i_n})$$
 where $\xi_j=\pi_j^\ast (\xi)$, the pullback of $\xi$ along the $j^{th}$ projection $\pi_j:C_p^n \to C_p$, and $r$ stands for the underlying real representation. We thus have the following formula on the total Chern classes and Pontrjagin classes,
\[
c_1(\xi_1^{i_1} \otimes \cdots \otimes \xi_n^{i_n})=i_1v_1+...+i_nv_n,\  p(r(\xi_1^{i_1} \otimes \cdots \otimes \xi_n^{i_n}))=(1-(i_1v_1+....+i_nv_n)^2) \]
\[
\Rightarrow p(V)= \prod_{i_1,..i_n}(1-(\sum i_jv_j)^2) = c(\rho_{C_p^n})^2 = (1+\sum_{i=0}^{k-1} (-1)^{k+i} Q_{k,i})^2.
\]
Now Proposition \ref{p1} and the commutative diagram of fibrations \eqref{fibseqG}\\
\[
\xymatrix{
     V_{p^n}\R^l \ar@{=}[rr] \ar[d] & & V_{p^n}\R^l \ar[d] \\
     (V_{p^n}\R^l)_{hC_p^n} \ar[rr] \ar[d] & & \tilde{Gr}_{p^n}(\R^l) \ar[d] \\
     BC_p^n \ar[rr] & & BSO(p^n) } 
\]
implies the result, as the map $C_p^n \to SO(p^n)$ induces the regular representation on $C_p^n$. 
\end{proof}

Although we do not have an easy computation of the index, the result forces that the first non-zero class in the index lies in a degree divisible by $p-1$. This will be used in the applications in the following section. We have the following analogous result in the complex and the quaternionic case. We define  $M_i \in \Z/p[v_1,\cdots, v_n] \subset H^\ast(BC_p^n)$  by the equation 
\[ (1+M_1+M_2+\cdots) (1+\sum_{i=0}^{n-1} (-1)^{n+i} Q_{n,i})= 1 \]
with $|M_i|=2i$.  In terms of this notation we have the following result.
\begin{theorem}\label{indWXn}
The Fadell-Husseini index of the complex Stiefel manifold is given by 
$$\Index_{C_p^n}(W_{l,p^n})= \langle M_i \mid l-p^n + 1 \leq i \leq l-1\rangle.$$ 
For the quaternionic case the result is 
$$\Index_{C_p^n}(X_{l,p^n})= \langle S_i \mid l-p^n +1 \leq i \leq l \rangle.$$ 
In both cases, the first non-trivial element in the Index lies in degree a multiple of $2(p-1)$.
\end{theorem}

\begin{proof}
As in Theorem \ref{indV1}, we apply Propositions \ref{C-stief} and \ref{H-stief}, and this involves the computation of Chern classes  in the complex case, and the symplectic Pontrjagin classes in the quaternionic case, for the pullback of the universal bundle to $BC_p^n$. Here $W$, the pullback of the canonical $n$-dimensional complex bundle to $BC_p^n$, is induced by the regular representation, and is thus 
$$W= \sum_{1\leq i_j \leq p} \xi_1^{i_1}\otimes \cdots \otimes \xi_n^{i_n},$$
and
$$c(W)=c(\rho_{C_p^n})= (1+\sum_{i=0}^{n-1} (-1)^{n+i} Q_{n,i}).$$ 
It follows that the classes $c'(W)$ equal the classes $M_i$. In the quaternionic case, the pullback $U$ of the canonical bundle to $C_p^n$, is twice the regular representation as a complex bundle, 
and the rest of the proof follows analogously by Propositions \ref{p3} and \ref{H-stief}. 
\end{proof}

\begin{rmk}
In the literature \cite{Vol92}, we find the assertion that for $1\leq b\leq p^n-1$, 
\[\theta^{2a} \in \Index_{C^n_p}V_{p^n}\R^{a(p^n-1)+b}.\]
If this were true, one would obtain stronger bounds for the results of the following section. However if $n>1$, this is true only for small values of $a$. We let  $n=2$, $a=p$ and $b=p^2-1$. The image of first non-trivial differential lies in degree $2p(p^2-1)$ in the spectral sequence for the fibration \eqref{specVpn}, which is generated by a transgression onto a linear combination of monomials in $Q_{2,0}$ and $Q_{2,1}$. Theorem \ref{indV1} implies that this equals $S_{\frac{p(p^2-1)}{2}}$. By \eqref{Seqn}, we get 
\[ 
\begin{aligned}
1+S_1+S_2+\cdots &= (1+Q_{2,0} - Q_{2,1})^{-2}  \\ 
 &= \sum_{i\geq 0} (i+1)(Q_{2,1}-Q_{2,0})^i .
\end{aligned}\]
For degree reasons, the only possible monomials of $Q_{2,0}$ and $Q_{2,1}$ in degree $2p(p^2-1)$ are $Q_{2,0}^p$ and $Q_{2,1}^{p+1}$. It follows that 
\[ S_{\frac{p(p^2-1)}{2}} = -Q_{2,0}^p + 2 Q_{2,1}^{p+1} = 2 Q_{2,1}^{p+1} - \theta^{2p}.\]
Thus $\theta^{2p} \notin \Index_{C_p^2}(V_{p^2}\R^{p(p^2-1)+(p^2-1)})$.
%

However, if $a<p$, then for degree reasons we obtain that $S_{\frac{a(p^n-1)}{2}}$ is a multiple of $Q_{n,0}^a=\theta^{2a}$. Therefore, if $a<p$, $\theta^{2a} \in \Index_{C_p^n}(V_{p^n}\R^{a(p^n-1)+b})$ for $1\leq b \leq p^n-1$.
\end{rmk}


\bigskip

\subsection{Computations at $p=2$}
We follow the same method at the prime $2$. The cohomology ring $H^*(BC_2;\Z/2)=H^*(\R P^\infty;\Z/2)=\Z/2[\mu]$ where $\mu$ is the first Stiefel-Whitney class of canonical line bundle over $\R P^\infty$. 

\begin{theorem}\label{comp-2}
 As an ideal in $H^\ast(BC_2;\Z/2)$ we have the following computations, \\
1) $\Index_{C_2}V_2\R^l = \langle \mu^{l-1}\rangle$.\\ 
2)    $\Index_{C_2}W_{l,2} = \langle \mu^{2(l-1)}\rangle$.\\
3) $\Index_{C_2}X_{l,2} = \langle \mu^{4(l-1)}\rangle$.
\end{theorem}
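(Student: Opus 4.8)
The plan is to run the same machinery as in Theorems \ref{indV}, \ref{indW}, \ref{indX}, now at the prime $2$ with the group $C_2$ acting on $St_2(\K^l)$ by swapping the two orthonormal vectors. First I would record that this swap is exactly the inclusion $C_2 \hookrightarrow \Sigma_2 = O(2)$ (resp. $U(2)$, $Sp(2)$) given by the regular representation $\rho_{C_2} = 1 + \sigma$; so the universal $2$-plane bundle over $BO(2)$ (resp. $BU(2)$, $BSp(2)$) pulls back along $BC_2 \to BO(2)$ to the bundle $V$ associated to $\rho_{C_2} = 1+\sigma$ over $BC_2 = \mathbb{RP}^\infty$. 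Over $\Z/2$, the action of $C_2$ on $H^\ast(St_2(\K^l);\Z/2)$ is trivial (for $\K=\R$ because $C_2 \subset SO(2)$ is connected as in Theorem \ref{indV}; for $\K=\C,\Hyp$ one argues similarly using $U(2)$, $Sp(2)$ connected, or directly from $\rho$ being the pullback of the defining representation), so the $E_2$-page of the Serre spectral sequence of $St_2(\K^l) \to (St_2(\K^l))_{hC_2} \to BC_2$ is $H^\ast(BC_2;\Z/2) \otimes H^\ast(St_2(\K^l);\Z/2)$.

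Next I would invoke the map of fibrations \eqref{fibseqG} together with the appropriate one of Propositions \ref{R-stief-2}, \ref{C-stief}, \ref{H-stief} to identify the differentials. In the real case (part 1), Proposition \ref{R-stief-2} gives $H^\ast(V_2\R^l;\Z/2) = \Lambda(\omega_{l-1},\omega_l)$ with $|\omega_j|=j-1$, and the $\omega_j$ are transgressive with $d_j(\omega_j) = -w_j'(V)$. Since $V = 1 + \sigma$ has total Stiefel–Whitney class $w(V) = 1+\mu$, the dual class is $w'(V) = \sum_{k\geq 0}\mu^k$, so $w_j'(V) = \mu^j$ and in particular the lowest transgression is $d_{l-1}(\omega_{l-1}) = \mu^{l-1}$ (up to sign, i.e.\ just $\mu^{l-1}$ over $\Z/2$). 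Hence the first nontrivial differential kills $\mu^{l-1}$ and $\Index_{C_2}(V_2\R^l) = \langle \mu^{l-1}\rangle$. For part 2, Proposition \ref{C-stief} gives $H^\ast(W_{l,2}) = \Lambda(y_{l-1},y_l)$ with $|y_j|=2j-1$ and $d_{2j}(y_j) = -c_j'(W)$ where $W = 1+\sigma_{\C}$ is the complexification; its total Chern class mod $2$ is $1 + \mu^2$ (the Euler/first Chern class of the real $\sigma$ viewed complexly reduces to $\mu^2 \bmod 2$, equivalently $w(\sigma)^2$), so $c'(W) = \sum_k \mu^{2k}$ and the lowest transgression is $d_{2(l-1)}(y_{l-1}) = \mu^{2(l-1)}$, giving $\Index_{C_2}(W_{l,2}) = \langle \mu^{2(l-1)}\rangle$. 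For part 3, Proposition \ref{H-stief} gives $H^\ast(X_{l,2}) = \Lambda(z_{l-1},z_l)$ with $|z_j|=4j-1$ and $d_{4j}(z_j) = -q_j'(U)$ where, as in Theorem \ref{indX}, $U$ restricted to $BC_2$ has underlying complex bundle two copies of $W$, so $q(U)$ mod $2$ equals $c(W)^2 = (1+\mu^2)^2 = 1+\mu^4$, whence $q'(U) = \sum_k \mu^{4k}$ and $d_{4(l-1)}(z_{l-1}) = \mu^{4(l-1)}$, giving $\Index_{C_2}(X_{l,2}) = \langle \mu^{4(l-1)}\rangle$.

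Finally I would note why these are the \emph{full} indices and not merely lower bounds: in each case the fibre cohomology $\Lambda(a_{l-1}, a_l)$ has exactly two exterior generators, the lower one $a_{l-1}$ transgresses first to the indicated power of $\mu$, and once that class is hit, $a_l$ (which has degree one more than $2a_{l-1}$-type considerations allow) is forced to be a permanent cycle — or more simply, the ideal $\langle \mu^{d}\rangle$ it generates already accounts for the entire kernel since all higher $w_k',c_k',q_k'$ are just the higher powers $\mu^{2d},\mu^{3d},\dots$ which lie in $\langle\mu^d\rangle$, and $a_l$ cannot transgress to anything not already in this ideal. The main obstacle I anticipate is the bookkeeping in the quaternionic case — correctly tracking that the symplectic Pontryagin class of $U$ reduces mod $2$ to $c(W)^2$, and confirming that no intermediate differential on $z_l$ produces a class outside $\langle \mu^{4(l-1)}\rangle$; this is handled exactly as the sign/degree analysis in the proof of Theorem \ref{indX}, so it is routine but needs care.
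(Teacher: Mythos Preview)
Your proof follows the same route as the paper's: compute $w(V)=1+\mu$, $c(W)=1+\mu^2$, $q(U)=1+\mu^4$ for the regular-representation bundles over $BC_2$, invert, and read off the transgressions via \eqref{fibseqG} and Propositions \ref{R-stief-2}, \ref{C-stief}, \ref{H-stief}. One correction: the swap $C_2 \hookrightarrow O(2)$ does \emph{not} land in $SO(2)$ --- the transposition $(1\,2)$ has determinant $-1$ --- so you cannot import the connectedness argument from Theorem \ref{indV} to get trivial coefficients in the real case; the $C_2$-action on $H^\ast(V_2\R^l;\Z/2)$ is nonetheless trivial simply because each graded piece of $\Lambda(\omega_{l-1},\omega_l)$ is at most one-dimensional over $\Z/2$, and with that fix the rest of your argument goes through unchanged.
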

\begin{proof}
We denote by $V$ (respectively $W$, $U$) the real (respectively complex, quaternionic) bundle over $BC_2$ associated to the regular representation. As the regular representation is written as the trivial plus the sign representation, we easily observe the following formulae. 
$$w(V)= 1+\mu.$$
$$c(W)=1+\mu^2.$$
$$q(U)=1+\mu^4.$$

In the real case, we compute the spectral sequence for the fibration 
\[  
 V_{2}\R^l \to (V_{2}\R^l)_{hC_2} \to BC_2 
 \] 
 using the map of fibrations \eqref{fibseqG}
\[
\xymatrix{
     V_{2}\R^l \ar@{=}[rr] \ar[d] & & V_{2}\R^l \ar[d] \\
     (V_{2}\R^l)_{hC_2} \ar[rr] \ar[d] & & {Gr}_{2}(\R^l) \ar[d] \\
     BC_2 \ar[rr] & & BO(2). }
\]
Applying Proposition \ref{R-stief-2}, the differentials are determined by the formal inverse of $w(V)=1+\mu$ which is $1+\sum_{k\geq 1} \mu^k$. As $H^\ast(V_2(\R^l);\Z/2) \cong \Lambda_{\Z/2}(x_{l-2},x_{l-1})$, the first non-zero differential is $d_{l-1}(x_{l-2})=\mu^{l-1}$. This completes the proof in the real case. 

The complex and the quaternionic cases follow analogously using Propositions \ref{C-stief} and \ref{H-stief}. 
\end{proof}

Note that if $X$ is a free $C_2$-space which is $(r-1)$-connected, then there is a $C_2$-map from $S(r\sigma)\to X$ by $C_2$-equivariant obstruction theory (Proposition \ref{conn}). This implies that $\Index_{C_2}(X) \subset \Index_{C_2}(S(r\sigma))=\langle \mu^{r+1} \rangle$.  Now note that $V_2(\R^l)$ is $(l-3)$-connected, so it already follows that the index does not have any classes in degree $<l-1$. Theorem \ref{comp-2} implies that the condition imposed by the connectivity hypothesis is sharp. This is also true for the complex and quaternionic Stiefel manifolds as $W_{l,2}$ is $(2l-4)$-connected and $X_{l,2}$ is $(4l-6)$-connected. 

One may now proceed with the computation for the group $C_2^n$ along the line above. We have $H^\ast (BC_2^n;\Z/2)\cong \Z/2[\mu_1,\cdots, \mu_n]$ with $|\mu_j|=1$ is the image of $\mu$ under the $j^{th}$-projection map. The total Stiefel Whitney class (and also the $\pmod{2}$ reduction of the Chern classes and the symplectic Pontrjagin classes) equals the product of all $1+l$ ( or a power of this product in case of Chern or symplectic Pontrjagin classes) where $l$ varies over the linear combinations of the $\mu_i$. Then, the restriction imposed by Proposition \ref{p3} is trivial in this case, and we do not obtain any new restrictions on the index other than the ones imposed by the connectivity hypothesis.  

While the Fadell-Husseini index is not very useful in ruling out equivariant maps in the case of $C_2$ or $C_2^n$, the bound on the $C_2$-index $\ind_{C_2}$ may be improved by $1$ in some cases. We use Steenrod operations to see this. 
\begin{theorem}\label{indc2} Let $l$ be odd. Then, $\ind_{C_2}(V_2(\R^l))\geq l-1$,  $\ind_{C_2}(W_{l,2})\geq 2l -2$, and
 $\ind_{C_2}(X_{l,2})\geq 4l-4$.  
\end{theorem}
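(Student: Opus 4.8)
The goal is to improve the index bound by one when $l$ is odd, and the natural tool is the action of Steenrod operations on the cohomology of the homotopy orbit space. Recall from Theorem~\ref{comp-2} that $\Index_{C_2}(V_2(\R^l)) = \langle \mu^{l-1}\rangle$, which by itself only gives $\ind_{C_2}(V_2(\R^l)) \geq l-2$; we want $\geq l-1$. First I would suppose for contradiction that there is a $C_2$-map $V_2(\R^l) \to S((l-1)\sigma)$. Passing to homotopy orbits over $BC_2$, this produces a commutative triangle of spaces over $BC_2$, hence a map of Serre spectral sequences, and in particular a ring map $H^\ast((S((l-1)\sigma))_{hC_2};\Z/2) \to H^\ast((V_2(\R^l))_{hC_2};\Z/2)$ compatible with the maps from $H^\ast(BC_2;\Z/2) = \Z/2[\mu]$.

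\textbf{Key steps.} The model computation to set up is the cohomology ring $H^\ast((V_2(\R^l))_{hC_2};\Z/2)$ read off from the spectral sequence in Theorem~\ref{comp-2}: with $H^\ast(V_2(\R^l);\Z/2) = \Lambda_{\Z/2}(x_{l-2},x_{l-1})$, the differential $d_{l-1}(x_{l-2}) = \mu^{l-1}$ kills $\mu^{l-1}$ and leaves a surviving class, call it $\bar x_{l-1}$, in total degree $l-1$, with $\mu^{l-2}$ still nonzero in degree $l-2$. Similarly $H^\ast((S((l-1)\sigma))_{hC_2};\Z/2) = \Z/2[\mu]/(\mu^{l-1}) \otimes \Lambda(\bar y)$ with $|\bar y| = l-1$ and $\bar y$ restricting to the fundamental class of the fiber $S^{l-2}$. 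The map of spectral sequences forces $\mu \mapsto \mu$, and the surviving degree-$(l-1)$ class $\bar y$ must map to a class that restricts on the fiber $V_2(\R^l)$ to a nonzero element of $H^{l-1}(V_2(\R^l);\Z/2)$ pulled back appropriately; the only candidate is (a unit multiple of) $\bar x_{l-1}$ plus possibly $\mu^{l-2}\cdot(\text{something})$, but the crucial point is what $\mathrm{Sq}^1$ does. On the sphere side, $\bar y$ has odd degree $l-1$ (here is where $l$ odd enters, so that $l-1$ is even — wait, I must be careful; $l$ odd means $l-1$ even, so $\bar y$ has even degree), and one computes $\mathrm{Sq}^{l-1}\bar y = \bar y^{\,2}$ or, more usefully, tracks $\mathrm{Sq}^1$ and the total Steenrod square of the fiber class. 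The genuine mechanism is: in $H^\ast(S((l-1)\sigma)_{hC_2};\Z/2)$ one has a relation expressing a Steenrod operation on $\bar y$ in terms of $\mu$-multiples (coming from $\mathrm{Sq}$ acting on the Euler-class relation $\mu^{l-1} = 0$ in the truncated polynomial ring, via the Wu-type formula $\mathrm{Sq}^i(\mu^{l-1})$), whereas in $H^\ast(V_2(\R^l)_{hC_2};\Z/2)$ the corresponding operation on the image class is obstructed because $\mu^{l-1} = 0$ happens "for the wrong reason" — it is hit by a transgression $d_{l-1}(x_{l-2})$, and the parity of $l-1$ (equivalently $l$ odd) makes $\mathrm{Sq}^{1}$ or $\mathrm{Sq}^{l-1}$ of the relevant class land in a group that is zero on one side but forced nonzero on the other. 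One extracts the contradiction by comparing $\mathrm{Sq}^{?}(\bar x_{l-1})$, computed from the known $\mathrm{Sq}$-action on $x_{l-2}, x_{l-1}$ in the Stiefel manifold (which is standard: $\mathrm{Sq}^i \omega_j = \binom{j-1}{i}\omega_{j+i}$ type formulas), against the value forced by naturality from the sphere.

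\textbf{Main obstacle.} The delicate part is pinning down the surviving cohomology ring $H^\ast((V_2(\R^l))_{hC_2};\Z/2)$ precisely enough — in particular the multiplicative extension problems and the exact $\mathrm{Sq}^i$-action on the class $\bar x_{l-1}$ that survives the spectral sequence — because a priori the image of $\bar y$ could differ from $\bar x_{l-1}$ by a decomposable term $\mu^{j}\cdot(\text{lower})$, and one must show no such correction can absorb the discrepancy. I expect this to be handled by a degree/filtration count: any correction term has strictly lower filtration, and $\mathrm{Sq}$ preserves or raises filtration in a controlled way, so the leading obstruction cannot be cancelled. The complex and quaternionic cases ($W_{l,2}$, $X_{l,2}$) then follow by the identical argument with $\mu$ replaced by $\mu^2$ (resp.\ $\mu^4$) and degrees $l-1$ replaced by $2l-2$ (resp.\ $4l-4$), the hypothesis "$l$ odd" again ensuring the relevant degree has the parity needed for the Steenrod-operation comparison to bite; one only needs that the total Steenrod square of the generating fiber classes $y_j$, $z_j$ of $W_{l,2}$, $X_{l,2}$ behaves compatibly, which is classical.
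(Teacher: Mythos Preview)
Your instinct to combine Steenrod operations with the spectral-sequence comparison is correct, but you are working at the wrong level, and that is why the argument refuses to crystallize. The paper's proof runs entirely on the \emph{fiber} cohomology, not on the homotopy-orbit cohomology.

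Suppose $f:V_2(\R^l)\to S((l-1)\sigma)$ is a $C_2$-map. In the induced map of Serre spectral sequences over $BC_2$, both $\ep_{l-2}\in H^{l-2}(S^{l-2})$ and $\omega_{l-2}\in H^{l-2}(V_2(\R^l))$ transgress to $\mu^{l-1}$, and in each spectral sequence this is the first nontrivial differential. Since the map on the bottom row $H^\ast(BC_2)$ is the identity, naturality of the transgression forces $f^\ast(\ep_{l-2})=\omega_{l-2}$ \emph{as classes in the fiber cohomology}. Now apply $Sq^1$ directly on the fibers: trivially $Sq^1(\ep_{l-2})=0$ in $H^\ast(S^{l-2})$, but the $l$-skeleton of $V_2(\R^l)$ is the stunted projective space $\R P^{l-1}/\R P^{l-3}$, so $Sq^1(\omega_{l-2})=\omega_{l-1}$ exactly when $l-2$ is odd, i.e.\ when $l$ is odd. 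Then
\[
0 \;=\; f^\ast Sq^1(\ep_{l-2}) \;=\; Sq^1 f^\ast(\ep_{l-2}) \;=\; Sq^1(\omega_{l-2}) \;=\; \omega_{l-1}\neq 0,
\]
a contradiction. The complex and quaternionic cases are identical, using $Sq^2$ and $Sq^4$ on the suspended truncated $\C P^{l-1}/\C P^{l-3}$ and $\Hyp P^{l-1}/\Hyp P^{l-3}$ respectively.

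Your detour through the surviving classes $\bar y$, $\bar x_{l-1}$ in the homotopy-orbit ring forces you to resolve multiplicative extensions and to track Steenrod operations across the spectral-sequence filtration --- precisely the ``main obstacle'' you flagged --- and none of that is needed. Your parity discussion went astray because you fixed attention on the degree-$(l-1)$ survivor rather than the degree-$(l-2)$ transgressive class; the role of the hypothesis ``$l$ odd'' is simply that it makes $Sq^1(\omega_{l-2})\neq 0$ in the fiber, nothing subtler.
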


\begin{proof}
Applying Theorem \ref{comp-2}, it remains to rule out $C_2$-maps $V_2(\R^l) \to S((l-1)\sigma)$, $W_{l,2} \to S((2l-2)\sigma)$ and $X_{l,2}\to S((4l-4)\sigma)$. We recall from \cite[Ch. IV, \S 10]{Whi78} that  
\begin{itemize}
\item The $l$-skeleton of $V_2(\R^l)$ is $\simeq$ to $\R P^{l-1}/\R P^{l-3}$. 
\item The $2l$-skeleton of $W_{l,2}$ is $\simeq$ to $\Sigma (\C P^{l-1}/\C P^{l-3})$. 
\item The $4l$-skeleton of $X_{l,2}$ is $\simeq$ to $\Sigma^3 (\Hyp P^{l-1}/ \Hyp P^{l-3})$. 
\end{itemize}
Suppose that there is a $C_2$-map $f: V_2(\R^l) \to S((l-1)\sigma)$. This induces a commutative diagram of fibrations 
$$\xymatrix{ V_2(\R^l) \ar[rr]^f \ar[d] & & S^{l-2} \ar[d] \\ 
          V_2(\R^l)_{hC_2} \ar[rr]^-{f_{hC_2}} \ar[d] && S((l-1)\sigma)_{hC_2} \ar[d]\\ 
           BC_2 \ar@{=}[rr]    &  & BC_2. }$$ 
This induces  a map of spectral sequences. The spectral sequence of the right column is determined by the differential $d_{l-1}(\ep_{l-2})=\mu^{l-1}$. From Theorem \ref{comp-2} we also have that $d_{l-1}(\omega_{l-2})=\mu^{l-1}$ in the spectral sequence of the left column. In both the spectral sequences, this is the first non-trivial differential. It follows that $f^\ast(\ep_{l-2}) = \omega_{l-2}$. 

On the other hand as $l$ is odd, so is $l-2$, which implies that the degree $l-2$ class in $H^\ast(\R P^{l-1};\Z/2)$ is mapped by $Sq^1$ to the degree $l-1$ class. It follows that in $H^\ast(V_2(\R^l);\Z/2)$, $Sq^1(\omega_{l-2})=\omega_{l-1}$. As a consequence we have 
$$f^\ast Sq^1(\ep_{l-2})=0$$
while
$$Sq^1f^\ast(\ep_{l-2})=Sq^1(\omega_{l-2})=\omega_{l-1}\neq 0,$$ 
a contradiction.  This completes the proof in the real case. 

The complex and the quaternionic cases are similar to the above by computing the Steenrod operations using the fact that they commute with the suspension. Under the given hypothesis, $Sq^2$ maps the generator of degree $2l-3$ to the generator in degree $2l-1$ in $H^\ast(\Sigma(\C P^{l-1}/\C P^{l-3}; \Z/2)$, and $Sq^4$ maps the generator of degree $4l-5$ to the generator of degree $4l-1$ in $H^\ast(\Sigma^3(\Hyp P^{l-1}/\Hyp P^{l-3});\Z/2)$.  The result follows analogously.
\end{proof}

\section{Applications}\label{appl}
In this section, we describe some applications for our index computations. Our application is oriented towards the Kakutani's theorem in geometry and it's generalizations. The Kakutani theorem in geometry states  : \textit{For any convex body in $\R^l$ there is an $l$-cube such that the convex body is inscribed in the $l$-cube.} This was proved by Kakutani \cite{Kak42} for $n=3$, Yamabe and Yujobo \cite{YaYu50} for $n\geq 4$. It is implied by the following theorem
\begin{theorem}\label{kak}
 For any map $f:  S^{l-1} \to \R$, there is an orthonormal basis of vectors $(v_1,\cdots,v_l)$ of $\R^l$ such that $f(v_i)=f(v_j)$ for every $i$ and $j$.
\end{theorem}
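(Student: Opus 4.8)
The plan is to deduce this from the index computation for the real Stiefel manifold $V_l(\R^l) = O(l)$ with its $\Sigma_l$-action by permuting the orthonormal frame, restricted to a suitable elementary abelian subgroup. First I would suppose, for contradiction, that there is a map $f: S^{l-1} \to \R$ such that no orthonormal basis $(v_1,\dots,v_l)$ has $f(v_1) = \cdots = f(v_l)$. Consider the map $F: V_l(\R^l) \to \R^l$ sending a frame $(v_1,\dots,v_l)$ to $(f(v_1),\dots,f(v_l))$; composing with the orthogonal projection onto the sum-zero subspace $W \subset \R^l$ (the standard representation of $\Sigma_l$) gives a $\Sigma_l$-equivariant map $\bar F: V_l(\R^l) \to W$. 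The hypothesis that no frame is sent into the diagonal says precisely that $\bar F$ never vanishes, so normalizing produces a $\Sigma_l$-equivariant map $V_l(\R^l) \to S(W)$.

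Next I would restrict the group to $G = C_p^n$ with $p^n \le l$ (or more simply handle the case at hand; for the cube theorem $l$ is arbitrary but the cited companion results cover the needed arithmetic), acting on $V_l(\R^l)$ through a transitive embedding $C_p^n \hookrightarrow \Sigma_{p^n} \hookrightarrow \Sigma_l$, so that the induced representation on $\R^{p^n}$ is the regular representation and the $C_p^n$-action on $V_l(\R^l)$ is free. Then $S(W)$ restricts to a free $C_p^n$-representation sphere whose index is computed by the formula recorded in Section \ref{prelim}, while $\Index_{C_p^n}(V_l(\R^l))$ is controlled by Theorems \ref{indV} and \ref{indV1}: it is generated by a $(p-1)$-power of $v$ (for $n=1$) of degree $(\ceil{l/(p-1)}-1)(p-1)$, and for $n \ge 2$ by classes of degree a multiple of $2(p-1)$ lying in $\langle v_1^{p-1},\dots,v_n^{p-1}\rangle$. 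By monotonicity of the Fadell-Husseini index, the existence of the equivariant map $V_l(\R^l) \to S(W)$ forces $\Index_{C_p^n}(S(W)) \subseteq \Index_{C_p^n}(V_l(\R^l))$; comparing the lowest-degree generators — the index of $S(W|_{C_p^n})$ sits in degree $\dim_\R(W|_{C_p^n}) = p^n - 1$ (appropriately reinterpreted via the $\xi_j$-decomposition) while the index of $V_l(\R^l)$ does not reach that degree when $l$ is large relative to $p^n$ — yields the contradiction.

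The arithmetic heart of the argument, and the step I expect to be the main obstacle, is matching the degree of the bottom class of $\Index_{C_p^n}(S(W))$ against the degree $(\ceil{l/(p-1)}-1)(p-1)$ (or the $n\ge 2$ analogue) of the bottom class of $\Index_{C_p^n}(V_l(\R^l))$, and checking that for the relevant range of $l$ the former strictly exceeds the latter — this is exactly the numerical hypothesis $l \ge (\floor{m/2}+1)(p-1)+1$ type bound appearing in the applications, specialized here to $m=1$ (a single real-valued function) so that one already gets an orthonormal pair, and then bootstrapping over a chain of subgroups $C_p \subset C_p^2 \subset \cdots$ or over several primes to promote "a $p$-tuple with equal values" to "a full orthonormal basis with equal values". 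The remaining steps — freeness of the action, equivariance and continuity of $\bar F$, and the reduction from "inscribed cube" to Theorem \ref{kak} — are routine: freeness holds because $\Sigma_l$ acts freely on frames, equivariance is built into the construction, and the cube statement follows by applying the theorem to the support function of the convex body.
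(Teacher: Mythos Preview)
There is a genuine gap. First, note that the paper does \emph{not} prove Theorem~\ref{kak}; it is quoted as a result of Yang, and the discussion immediately following it explains that the index method of this paper recovers the case $l=p$ an odd prime and \emph{explicitly fails} otherwise. For $l=p^n$ with $n\ge 2$ the paper says ``we may not deduce the result from the index computations on account of our significantly weaker Theorem~\ref{indV1}'', and for $l$ not a prime power it observes that restricting to subgroups of $\Sigma_l$ is useless because equivariant maps $EG\to S(\res_G W)$ exist. So the very inequalities you plan to verify are ones the paper already knows do not hold in the required direction.

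Concretely, your restriction step breaks. If you embed $C_p^n\hookrightarrow \Sigma_{p^n}\hookrightarrow \Sigma_l$ with $p^n<l$, then the standard representation $W$ of $\Sigma_l$ restricts to $\bar\rho_{C_p^n}\oplus (l-p^n)\epsilon$, so $S(W)$ has $C_p^n$-fixed points and $\Index_{C_p^n}(S(W))=0$; monotonicity gives no obstruction at all. The only way to avoid trivial summands is to take $l=p^n$ exactly. For $n=1$ this works (and is the argument the paper gives: $\Index_{C_p}V_p\R^p=\langle v^{p-1}\rangle$ does not contain $\Index_{C_p}S(\bar\rho)=\langle v^{(p-1)/2}\rangle$). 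For $n\ge 2$, however, Theorem~\ref{indV1} only bounds the lowest degree in $\Index_{C_p^n}V_{p^n}\R^{p^n}$ from below by $2(p-1)$, which is \emph{smaller} than $p^n-1=\deg\big(\text{generator of }\Index_{C_p^n}S(\bar\rho)\big)$, so the needed non-containment cannot be concluded.

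Finally, the ``bootstrapping'' you propose---promoting the existence of a $p$-tuple of orthogonal vectors with equal $f$-values (for various primes $p\le l$) to a full orthonormal basis with equal values---is not a valid argument: the tuples produced for different primes have no reason to be compatible with one another, and there is no mechanism here to assemble them into a single frame. This is precisely why the general statement requires Yang's separate proof rather than the index comparisons of this paper.
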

 The above statement was proved by Yang \cite{Yan54}.

We make slightly different approach to this, which presents us with a problem in equivariant homotopy theory. From above mentioned $f$, we construct a map $F: V_l\R^l \to \R^l$ as
\[F(v_1,\cdots,v_l) = (f(v_1),\cdots,f(v_l)).\]
The map $F$ is $\Sigma_l$-equivariant. That is, we have a $\Sigma_l$-action on $V_l\R^l$ by 
\[(\sigma,(v_1,\cdots,v_l)) \mapsto (v_{\sigma(1)},\cdots,v_{\sigma(l)}) \] 
and on $\R^l$ by permuting coordinates. Let $W$ be the standard representation of $\Sigma_l$, so that as a representation the latter is $W\oplus \epsilon$, where $\epsilon$ is the trivial one dimensional representation. We compose $F$ with the projection onto $W$ so that we have a $\Sigma_l$-equivariant map $\tilde{F} : V_l\R^l \to W$ and we note,

\begin{prop} \label{stdred}
 $\tilde{F}(v_1,\cdots,v_l)=0$  iff $f(v_1)=f(v_2)=\cdots =f(v_l)$.
\end{prop}
\begin{proof}
Note that $\tilde{F}(v_1,\cdots,v_l)=0$ iff $F(v_1,\cdots,v_l) \in \epsilon$. The trivial representation $\epsilon$ is embedded as the diagonal in $\R^l$, so the result follows. 
\end{proof}

If $l$ is an odd prime $p$, a counter-example to Kakutani-Yamabe-Yujobo theorem gives a $\Sigma_p$ equivariant map $V_p\R^p \to W$ which is non-zero at every point. We restrict to a $p$-cycle $C_p$, and also apply the deformation retraction of $W-0$ to the unit sphere $S(W)$. Note that the standard representation $W$ restricts to the reduced regular representation $\bar{\rho}$ of $C_p$. This gives a $C_p$-map $V_p(\R^p) \to S(\bar{\rho})$. We compare the Fadell-Husseini indices 
$$\Index_{C_p}(S(\bar{\rho}))=\langle v^{\frac{p-1}{2}} \rangle, \quad \mbox{and } \Index_{C_p}(V_p\R^p) = \langle v^{p-1} \rangle $$
by Theorem \ref{indV}. This rules out any possible $C_p$-map from $V_p\R^p \to S(W)$, implying the result in this case.  For $l=p^n$ an odd prime power, we may use the map $C_p^n \to \Sigma_{p^n}$ induced by the action of $C_p^n$ on itself by left multiplication. In this case we may not deduce the result from the index computations on account of our significantly weaker Theorem \ref{indV1}. Finally, if $l$ is not a prime power, restricting to various subgroups of $\Sigma_l$ is not useful on account of the existence of maps from $EG$ to $S(\res_G(W))$ proved in \cite{BaGh2017} for many examples of $G$.  

In the following we seek a generalization of Kakutani's theorem replacing the target $\R$ by $\R^m$. The results will follow the pattern above : the strongest results for odd primes $p$, significantly weaker results for prime powers, and no results for non prime powers. The problem we consider here is -- \\
\textbf{Question:} \textit{Find the integer $l(m,n)$ such that for $l \geq l(m,n)$ and any map $f : S^l \to \rls^m$ there are  $n$-mutually orthogonal vectors to the same value.} 

Given a $f$ as in the question, we proceed analogously as above defining $F:V_n \R^l \to \R^{mn}$ by 
$$F(v_1,\cdots,v_n)=(f(v_1),\cdots, f(v_n)).$$
We put the $\Sigma_n$ action on $V_n\R^l$ by permuting the vectors $v_i$, and on $\R^{mn}=(\R^m)^n$ by permuting the coordinates. As a $\Sigma_n$-representation, $\R^{mn}$ decomposes as $W^m \oplus \ep^m$, where $\ep^m$ includes as the diagonal in $(\R^m)^n$. We define $\tilde{F}$ as the projection onto $W^m$ and observe 
\begin{prop}
 $\tilde{F}(v_1,\cdots,v_n)=0$  iff $f(v_1)=f(v_2)=\cdots =f(v_n)$.
\end{prop}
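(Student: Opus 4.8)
The plan is to mimic the proof of Proposition~\ref{stdred}, with $\R$ replaced by $\R^m$; the only point that genuinely needs verification is that the trivial isotypic summand of $\R^{mn}$ is exactly the locus on which $f$ takes a common value on $v_1,\dots,v_n$.

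First I would make the $\Sigma_n$-equivariant decomposition explicit. Identifying $\R^{mn}=(\R^m)^n$ with $\R^n\otimes_\R\R^m$, where $\Sigma_n$ acts only on the first tensor factor by permuting coordinates, the splitting $\R^n=W\oplus\ep$ of $\Sigma_n$-representations yields $\R^{mn}=(W\otimes\R^m)\oplus(\ep\otimes\R^m)=W^m\oplus\ep^m$. Here $\ep^m=\ep\otimes\R^m$ is carried by the diagonal $\Delta=\{(w,\dots,w):w\in\R^m\}\subset(\R^m)^n$, and $W^m$ is its orthogonal complement; in particular this splitting is $\Sigma_n$-stable, so the composite $\tilde F$ of $F$ with the orthogonal projection $(\R^m)^n\twoheadrightarrow W^m$ is a well-defined $\Sigma_n$-equivariant map.

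Then I would simply observe that $\tilde F(v_1,\dots,v_n)=0$ if and only if $F(v_1,\dots,v_n)=(f(v_1),\dots,f(v_n))$ lies in the kernel of the projection onto $W^m$, namely $\ep^m=\Delta$; and $(f(v_1),\dots,f(v_n))\in\Delta$ precisely when $f(v_1)=f(v_2)=\dots=f(v_n)$. This completes the argument.

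There is essentially no obstacle here: the statement is a direct transcription of Proposition~\ref{stdred}, and the computation of $\tilde F$ is purely linear-algebraic. The only mild care needed is to identify the diagonal with the trivial summand $\ep^m$ (rather than with a piece of $W^m$) and to check that $\R^n=W\oplus\ep$ tensors up to a genuinely $\Sigma_n$-equivariant splitting of $\R^{mn}$, both of which are immediate.
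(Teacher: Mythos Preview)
Your proposal is correct and follows exactly the paper's approach: the paper states this proposition without a separate proof, treating it as the evident analogue of Proposition~\ref{stdred}, and your argument simply makes that analogy explicit by identifying $\ep^m$ with the diagonal in $(\R^m)^n$. There is nothing to add.
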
  
Using these notations, we have the following theorem. 
\begin{theorem}\label{Kakp}
1) Let $p$ be an odd prime. Then $l(m,p)\leq (\floor{\frac{m}{2}}+1)(p-1) +1$. \\ 
2) $l(m,2)\leq m+1$ if $m$ is even, and $\leq m+2$ if $m$ is odd. 
\end{theorem}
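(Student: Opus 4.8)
The plan is to argue by contradiction: a counterexample to the assertion in part (1), say a map $f: S^{l-1}\to\R^m$ with $l\geq(\floor{\frac{m}{2}}+1)(p-1)+1$ such that no $p$ mutually orthogonal unit vectors are sent to a common value, would produce (via the reduction in Proposition \ref{stdred} and its $\R^m$-analogue, followed by the deformation retraction of $W^m\ssm 0$ onto $S(W^m)$) a $\Sigma_p$-equivariant map $V_p\R^l\to S(W^m)$ that is nonvanishing everywhere. Restricting the $\Sigma_p$-action to the cyclic subgroup $C_p$ generated by a $p$-cycle, and using that $W$ restricts to the reduced regular representation $\bar\rho$ of $C_p$ of real dimension $p-1$, we obtain a $C_p$-equivariant map $V_p\R^l\to S(m\bar\rho)$. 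The monotonicity of the Fadell--Husseini index then gives $\Index_{C_p}(S(m\bar\rho))\subseteq \Index_{C_p}(V_p\R^l)$, and the strategy is to show these two ideals are incompatible, i.e.\ that the generator of the right-hand side sits in strictly higher degree than the generator of the left-hand side, forcing the containment to fail.

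First I would compute $\Index_{C_p}(S(m\bar\rho))$. Since $(m\bar\rho)^{C_p}=0$ and $\dim_\R(m\bar\rho)=m(p-1)$, the discussion following Proposition \ref{1P} gives $\Index_{C_p}(S(m\bar\rho))=\langle v^{m(p-1)/2}\rangle$ when $m(p-1)$ is even (which it is, as $p-1$ is even), so the lowest-degree class is in degree $m(p-1)$. Next I would apply Theorem \ref{indV} to $V_p\R^l$: with $m'=\ceil{\frac{l}{p-1}}-1$ we have $\Index_{C_p}(V_p\R^l)=\langle v^{m'(p-1)}\rangle$, whose lowest-degree class is in degree $2m'(p-1)$. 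For the containment $\Index_{C_p}(S(m\bar\rho))\subseteq\Index_{C_p}(V_p\R^l)$ to hold we would need $2m'(p-1)\leq m(p-1)$, i.e.\ $m'\leq\floor{\frac m2}$, i.e.\ $\ceil{\frac{l}{p-1}}-1\leq\floor{\frac m2}$, i.e.\ $l\leq(\floor{\frac m2}+1)(p-1)$. Thus, as soon as $l\geq(\floor{\frac m2}+1)(p-1)+1$, the containment fails, so no such nonvanishing equivariant map can exist, and the asserted orthogonal $p$-tuple must occur; this gives $l(m,p)\leq(\floor{\frac m2}+1)(p-1)+1$.

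For part (2), the same scheme applies with $p=2$: a counterexample yields a $C_2$-map $V_2\R^l\to S(m\sigma)=S^{m-1}$ with antipodal action, hence $\Index_{C_2}(S(m\sigma))=\langle\mu^m\rangle\subseteq\Index_{C_2}(V_2\R^l)=\langle\mu^{l-1}\rangle$ by Theorem \ref{comp-2}, which fails once $l-1>m$, i.e.\ $l\geq m+2$, already giving $l(m,2)\leq m+2$. To sharpen this to $l(m,2)\leq m+1$ when $m$ is even, I would instead use the numerical index $\ind_{C_2}$: a $C_2$-map $V_2\R^l\to S(m\sigma)=S((m-1+1)\sigma)$ would force $\ind_{C_2}(V_2\R^l)\leq m-1$, contradicting Theorem \ref{indc2} which gives $\ind_{C_2}(V_2\R^l)\geq l-1$ when $l$ is odd; taking $l=m+1$ (odd, since $m$ is even) yields $l-1=m>m-1$, the desired contradiction. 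The main obstacle is keeping the bookkeeping between the topological reduction and the index inequality straight—in particular verifying that the deformation retraction and restriction to $C_p$ genuinely produce a map to $S(m\bar\rho)$ with the stated fixed-point and dimension data—while the index computations themselves are immediate consequences of the theorems already proved.
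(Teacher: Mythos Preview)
Your proposal is correct and follows essentially the same route as the paper: for part (1) you restrict to $C_p\subset\Sigma_p$, compute $\Index_{C_p}(S(m\bar\rho))=\langle v^{m(p-1)/2}\rangle$ and compare with Theorem \ref{indV} exactly as the paper does; for part (2) you combine the Fadell--Husseini bound from Theorem \ref{comp-2} (yielding $l(m,2)\leq m+2$) with Theorem \ref{indc2} at the odd value $l=m+1$ to sharpen the even-$m$ case, which is precisely what the paper's terse reference to Theorem \ref{indc2} is pointing to.
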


\begin{proof} 
The second statement follows from Theorem \ref{indc2} where it is shown that if $l$ is odd there is no $C_2$-map from $V_2\R^l \to S((l-1)\sigma)$. For the first statement, we use the inclusion $C_p \subset \Sigma_p$ given by the $p$-cycle $(1,\cdots, p)$, so that $W$ restricts to the reduced regular representation $\bar{\rho}$ of $C_p$. Then, $S(\bar{\rho}^m)$ is a free $C_p$-space whose underlying space is a sphere of dimension $m(p-1)-1$. It follows that $\Index_{C_p}(S(\bar{\rho}^m)) = \langle v^{\frac{m(p-1)}{2}}\rangle $. 

On the other hand if $l \geq (\floor{\frac{m}{2}}+1)(p-1) +1$, $\ceil{\frac{l}{p-1}}-1$ is at least $(\floor{\frac{m}{2}}+1)>\frac{m}{2}$. Therefore $\langle  v^{\frac{m(p-1)}{2}}\rangle$ is not contained in $\Index_{C_p}(V_p\R^l)= \langle v^{(\ceil{\frac{l}{p-1}}-1)(p-1)}\rangle$ (by Theorem \ref{indV}). The result follows.     
\end{proof}
Theorem \ref{Kakp} may be restated in the following manner for the question stated above. 
\begin{cor} \label{Kakp2} 
1) Let $f: S^{l-1} \to \R^m$  for $l\geq m+1$ if $m$ is even, and $l\geq m+2$ if $m$ is odd. Then there are orthogonal unit vectors $v,w$ in $\R^l$ such that $f(v)=f(w)$. \\ 
2) Let $f:S^{l-1} \to \R^m$ for $l \geq (\floor{\frac{m}{2}}+1)(p-1)+1$ for an odd prime $p$. Then, there are orthogonal unit vectors $v_1,\cdots,  v_p$ in $\R^l$ such that $f(v_1)=f(v_2)=\cdots = f(v_p)$. 
\end{cor}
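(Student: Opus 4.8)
The plan is to deduce Corollary \ref{Kakp2} directly from Theorem \ref{Kakp}, which in turn packages the index comparison already carried out. Concretely, part (1) is nothing but a restatement of Theorem \ref{Kakp}(2): given $f\colon S^{l-1}\to\R^m$ with $l$ in the stated range, I would form $F\colon V_2\R^l\to\R^{2m}$ by $F(v_1,v_2)=(f(v_1),f(v_2))$, note it is $\Sigma_2$-equivariant for the swap action on the source and the coordinate-permutation action on $(\R^m)^2$, and project onto the standard part $W^m$ to obtain a $C_2$-equivariant (indeed $\Sigma_2$-equivariant) map $\tilde F\colon V_2\R^l\to W^m$. By the Proposition preceding Theorem \ref{Kakp}, $\tilde F$ has a zero precisely when $f(v_1)=f(v_2)$, so it suffices to show $\tilde F$ cannot be nowhere-zero. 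If it were, composing with the radial retraction $W^m\setminus 0\simeq S(W^m)$ and restricting to $C_2$ gives a $C_2$-map $V_2\R^l\to S((l-1)\sigma)$ (using that $W$ restricts to $\sigma$ for $C_2$, so $W^m$ restricts to $m\sigma$, but here one must be a little careful about the exact dimension — see below), contradicting Theorem \ref{indc2} when $l$ is odd and Theorem \ref{comp-2} together with the connectivity bound when $l$ is even.

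For part (2) the argument is the same with $p$ replacing $2$: from $f\colon S^{l-1}\to\R^m$ and $l\geq(\floor{\frac m2}+1)(p-1)+1$, build $F\colon V_p\R^l\to(\R^m)^p$, project to the standard summand $W^m$, restrict the $\Sigma_p$-action to the cyclic subgroup $C_p=\langle(1,2,\dots,p)\rangle$, and observe that $W$ restricts to the reduced regular representation $\bar\rho$, hence $W^m$ restricts to $\bar\rho^m$, a free $C_p$-representation with $S(\bar\rho^m)$ a sphere of dimension $m(p-1)-1$. Thus a nowhere-zero $\tilde F$ would yield a $C_p$-map $V_p\R^l\to S(\bar\rho^m)$, and by the monotonicity property of the index this forces $\Index_{C_p}(S(\bar\rho^m))\subseteq\Index_{C_p}(V_p\R^l)$. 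Now $\Index_{C_p}(S(\bar\rho^m))=\langle v^{m(p-1)/2}\rangle$ while Theorem \ref{indV} gives $\Index_{C_p}(V_p\R^l)=\langle v^{(\ceil{l/(p-1)}-1)(p-1)}\rangle$; the hypothesis on $l$ makes $\ceil{l/(p-1)}-1\geq\floor{\frac m2}+1>\frac m2$, so $(\ceil{l/(p-1)}-1)(p-1)>\frac{m(p-1)}2$ (up to the usual parity rounding), whence the required containment fails. This contradiction shows $\tilde F$ must vanish somewhere, giving the desired orthogonal $p$-tuple.

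The one genuinely delicate point — and the step I expect to need the most care — is bookkeeping the relation between "$\tilde F$ nowhere zero" and "there is a $C_p$-map into the sphere of the right dimension", together with the parity of $m(p-1)$: the index of the representation sphere $S(\bar\rho^m)$ involves $v^{m(p-1)/2}$ and one must be slightly careful when $m(p-1)$ is odd (i.e. $p=2$, $m$ odd), which is exactly why the $p=2$ statement splits into the even and odd cases and leans on Theorem \ref{indc2} rather than on the raw Fadell--Husseini index. Everything else is a routine transcription of the equivariant set-up from Theorem \ref{kak} and Proposition \ref{stdred} to the target $\R^m$, combined with the index computations already proved. I would therefore write the proof of Corollary \ref{Kakp2} as a short paragraph: restate the construction of $\tilde F$, invoke the relevant Proposition to reduce to non-existence of a nowhere-zero equivariant map, and then cite Theorem \ref{Kakp} (equivalently, re-run its two-line index comparison) to finish.
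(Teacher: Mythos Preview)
Your proposal is correct and matches the paper's approach: the paper presents Corollary~\ref{Kakp2} simply as a restatement of Theorem~\ref{Kakp} (with no separate proof), and you correctly reconstruct that Theorem's argument in both parts. One small cosmetic point: the map you obtain from a nowhere-zero $\tilde F$ lands in $S(m\sigma)$, not $S((l-1)\sigma)$; you flag this yourself, and once corrected the case analysis (Fadell--Husseini via Theorem~\ref{comp-2} handles $l\geq m+2$, while Theorem~\ref{indc2} handles the boundary case $l=m+1$ with $m$ even, where $l$ is odd) goes through exactly as in the paper.
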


The next result is about prime powers $n$. 
\begin{thm} \label{Kakpn}
Let $f:S^{l-1}\to \R^m$ for $l\geq (\frac{m}{2}+1)(p^k-1)  +1$ for an odd prime $p$, then there are $p^k$ orthogonal vectors $v_1,\cdots, v_{p^k}$ in $\R^l$ such that $f(v_1)=f(v_2) =\cdots = f(v_{p^k})$. That is, $l(m, p^k)\leq (\frac{m}{2}+1)(p^k-1)  +1$. 
\end{thm}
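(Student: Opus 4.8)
The plan is to argue by contradiction, feeding the equivariant map $\tilde F$ constructed above into the index estimate of Theorem \ref{indV1}; the decisive quantity will not be the exact value of $\Index_{C_p^k}(V_{p^k}\R^l)$ (which we do not know) but the smallest degree in which it can be non-zero. First I would suppose the conclusion fails, i.e.\ that no $p^k$-tuple of orthonormal vectors in $\R^l$ is carried by $f$ to a single value; then $\tilde F\colon V_{p^k}\R^l\to W^m$ is nowhere zero, so radial projection (which is equivariant since the representation is orthogonal) gives a $\Sigma_{p^k}$-equivariant map $V_{p^k}\R^l\to S(W^m)$. Restricting the $\Sigma_{p^k}$-action along the embedding $C_p^k\hookrightarrow\Sigma_{p^k}$ coming from the action of $C_p^k$ on itself by left translation, and using that $W$ pulls back to the reduced regular representation $\bar\rho=\bar\rho_{C_p^k}$, this produces a $C_p^k$-equivariant map $V_{p^k}\R^l\to S(\bar\rho^m)$, whence by monotonicity of the index
\[
\Index_{C_p^k}\!\big(S(\bar\rho^m)\big)\ \subseteq\ \Index_{C_p^k}\!\big(V_{p^k}\R^l\big).
\]

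Next I would locate a low-degree class on the left. The fibration $S(\bar\rho^m)\to S(\bar\rho^m)_{hC_p^k}\to BC_p^k$ has fibre the sphere $S^{m(p^k-1)-1}$, whose fundamental class transgresses to the Euler class $e(\bar\rho^m)=e(\bar\rho)^m$; hence $e(\bar\rho)^m\in\Index_{C_p^k}(S(\bar\rho^m))$ (in fact the Gysin sequence shows this index equals the principal ideal it generates). Writing $\bar\rho=\bigoplus_\chi r(\chi)$ as a sum over the $\frac{p^k-1}{2}$ conjugate pairs of non-trivial characters of $C_p^k$, the class $e(\bar\rho)=\pm\prod_\chi c_1(\chi)$ is a product of non-zero linear forms in $v_1,\dots,v_k$, hence a non-zero element of the polynomial subring $\Z/p[v_1,\dots,v_k]$ of degree $p^k-1$. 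Therefore $e(\bar\rho)^m$ is a non-zero class of degree $m(p^k-1)$ lying, by the displayed inclusion, in $\Index_{C_p^k}(V_{p^k}\R^l)$.

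Finally I would bound below the degree in which $\Index_{C_p^k}(V_{p^k}\R^l)$ can be non-zero, by re-examining the spectral sequence of Theorem \ref{indV1} (applied with $n=k$). By \eqref{R-Stief-odd}, $H^\ast(V_{p^k}\R^l;\Z/p)$ is an exterior algebra whose transgressive generators $x_i$ occur only for $i\geq i_0:=\ceil{\frac{l-p^k+1}{2}}$ and sit in degree $4i-1$, together with a single permanent cycle ($\sigma_{l-p^k}$ when $l$ is odd, $\ep_l$ when $l$ is even). Since those permanent cycles transgress to zero, the classes $x_i$ with $p_i'(V)=0$ are themselves permanent cycles, and every differential landing on the base row $E_r^{\ast,0}$ is, up to sign, the product of a class from the base with one of the transgressions $d_{4i}(x_i)=-p_i'(V)$; hence the edge homomorphism $H^\ast(BC_p^k)\to H^\ast\big((V_{p^k}\R^l)_{hC_p^k}\big)$ is injective in every degree below $\min\{\,4i:\ x_i\text{ occurs and }p_i'(V)\neq0\,\}$, and this minimum is $\geq 4i_0\geq 2(l-p^k+1)$. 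Thus every non-zero class of $\Index_{C_p^k}(V_{p^k}\R^l)$ has degree $\geq 2(l-p^k+1)$, and comparing with the previous paragraph forces $m(p^k-1)\geq 2(l-p^k+1)$, i.e.\ $l\leq(\frac m2+1)(p^k-1)$ — contradicting $l\geq(\frac m2+1)(p^k-1)+1$. So the required $p^k$ orthonormal vectors exist, and $l(m,p^k)\leq(\frac m2+1)(p^k-1)+1$.

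The hard part will be the degree bound of the last paragraph. The crude bound coming from the $(l-p^k-1)$-connectivity of $V_{p^k}\R^l$ only forbids index classes below degree $l-p^k+1$, which is off by a factor of two and is not enough; the improvement rests on the fact that over $\Z/p$ the generators of $H^\ast(V_{p^k}\R^l;\Z/p)$ sit in degrees about $2l$, so the first transgression cannot occur before degree roughly $2(l-p^k)$. Making this rigorous — in particular verifying that products of the $x_i$ with the permanent cycles $\sigma_{l-p^k}$, $\ep_l$ (and with one another) never create a lower-degree element of the kernel of the edge homomorphism — is where the care is needed.
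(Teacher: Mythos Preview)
Your argument is correct and follows the same overall architecture as the paper's proof: assume a counterexample, produce a $C_p^k$-map $V_{p^k}\R^l\to S(\bar\rho^m)$, and derive a contradiction by comparing the lowest degree in which $\Index_{C_p^k}(V_{p^k}\R^l)$ can be non-zero with the degree $m(p^k-1)$ of a non-zero class in $\Index_{C_p^k}(S(\bar\rho^m))$.

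There are two small differences worth noting. First, where the paper invokes the localization theorem (Proposition~\ref{1P}) to guarantee a non-zero index class for $S(\bar\rho^m)$ in degree $m(p^k-1)$, you instead compute the Euler class $e(\bar\rho)^m$ explicitly as a product of non-zero linear forms in the $v_j$; this is more concrete and gives the same conclusion. Second, the paper uses both pieces of information from Theorem~\ref{indV1}: the lowest degree in the index is $\geq 2(l-p^k+1)$ \emph{and} is divisible by $2(p-1)$, leading to the bound $2\ceil{\frac{l-p^k+1}{p-1}}(p-1)$. You use only the first piece, observing that $2(l-p^k+1)\geq m(p^k-1)+2$ already suffices under the hypothesis $l\geq(\frac m2+1)(p^k-1)+1$. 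This is a genuine simplification --- the divisibility by $2(p-1)$ (and hence Proposition~\ref{p3}) is not actually needed for this particular theorem, and your route avoids it. Your care in the final paragraph about products with the permanent cycles $\sigma_{l-p^k}$ and $\ep_l$ is warranted and your reasoning there is sound: since these are permanent cycles, any class in their row is of the form $\alpha\cdot\gamma$ with $\alpha$ on the base and $\gamma$ the permanent cycle, and the Leibniz rule shows such classes cannot hit $E_r^{\ast,0}$.
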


\begin{proof} 
We will rule out the existence of $\Sigma_{p^k}$-maps $V_{p^k}\R^l \to S(W^m)$. We use the inclusion $C_p^k \subset \Sigma_{p^k}$ induced by the action of $C_p^k$ on itself by left multiplication. The standard representation $W$ of $\Sigma_{p^k}$ restricts to the reduced regular representation $\bar{\rho}$ of $C_p^k$. Our method involves a comparison of Fadell-Husseini indices of $V_{p^k}(\R^l)$ and $S(W^m)$ via the commutative diagram of fibrations. 
 \[
\xymatrix{
       V_{p^k}\R^l\ar[rr] \ar[d]       & & S(W^{m})\ar[d]  \\
        (V_{p^k}\R^l)_{hC_p^k} \ar[rr] \ar[d] & & S(W^{m})_{hC_p^k} \ar[d]  \\  
        BC_p^k \ar@{=}[rr]  & & BC_p^k \\ } \] 
 Since $S(W^{m})^{C_p^k}=\phi$, the localization theorem (proposition \ref{1P}) implies that the map $H^*(BC_p^k;\Z/p)\to H^*(S(W^{m})_{hC_p^k};\Z/p)$ is not injective. Also observe that in the spectral sequence of the right vertical spectral sequence, the only non-trivial differential is $d_{m(p^k-1)}$ which is determined by $d_{m(p^k-1)}(\ep_{m(p^k-1)-1})$. The conclusion from the localization theorem implies that this is non-trivial, implying $\Index_{C_p^k}(S(W^{m})) \cap H^{m(p^k-1)}(BC_p^k;\Z/p)\neq \varnothing$.

From Theorem \ref{indV1}, we observe that the elements of $\Index_{C_p^k}(V_{p^k}\R^l )$ in the lowest degree is generated by monomials in $v_i$ of degree $p-1$. This is also the image of the first non-trivial differential in the spectral sequence associated to $V_{p^k}\R^l \to (V_{p^k}\R^l)_{hC_p^k} \to BC_p^k$. This is both $\geq 2(l-p^k+1)$ and divisible by $2(p-1)$.  This implies that the lowest degree terms are in degree at least $2(\ceil{\frac{l-p^k+1}{p-1}})(p-1)$.  
We compute 
\begin{align*} 
2\ceil{\frac{ (\frac{m}{2}+1)(p^k-1)  +1 -(p^k-1)}{p-1}}(p-1) & = 2\ceil{\frac{ \frac{m}{2}(p^k-1)  +1}{p-1}}(p-1) \\ 
                                                   & = 2(p-1) \ceil{\frac{m}{2}(1+p+\cdots + p^{k-1})+\frac{1}{p-1}} \\
                                                   &=\begin{cases} 2(p-1)(\frac{m}{2}(1+p+\cdots+p^{k-1})+1) & \mbox{if } $mk$ \mbox{ is even}     \\ 
                                                                            2(p-1)(\frac{m}{2}(1+p+\cdots+p^{k-1})+\frac{1}{2}) & \mbox{if } $mk$ \mbox{ is odd}                                                    \end{cases} \\
                                                                            & > m(p^k -1).
\end{align*}
This implies that under the given hypothesis, $\Index_{C_p^k}(V_{p^k}\R^l)$ does not contain $\Index_{C_p^k}S(W^m)$ and therefore, there cannot be any $C_p^k$-map $V_{C_p^k}\R^l \to S(W^m)$. 
\end{proof}

We next discuss the complex and quaternionic analogue of these results. For these results, we get the stronger consequence that orthogonal vectors in complex or quaternionic sense are mapped to the same value. We have the following results in this case. 

\begin{thm} \label{KakCH} 
A) Let $f:S(\C^l)=S^{2l-1} \to \R^m$ be a map. Then we have\\
1) Suppose that $2l\geq m+2$ if $m \not\equiv 2 \pmod{4}$, and $2l\geq m+4$ for $m\equiv 2 \pmod{4}$.  Then there are orthogonal unit vectors $v,w$ in $\C^l$ such that $f(v)=f(w)$.\\
2) Suppose that $l \geq (\floor{\frac{m}{2}}+1)(p-1)+1$ for an odd prime $p$. Then, there are orthogonal unit vectors $v_1,\cdots,  v_p$ in $\C^l$ such that $f(v_1)=f(v_2)=\cdots = f(v_p)$. \\
3) Suppose that $l\geq (\frac{m}{2}+1)(p^k-1)  +1$ for an odd prime $p$. Then, there are $p^k$ orthogonal vectors $v_1,\cdots, v_{p^k}$ in $\C^l$ such that $f(v_1)=f(v_2) =\cdots = f(v_{p^k})$. \\
B) Let $f:S(\Hyp^l)=S^{4l-1} \to \R^m$ be a map. Then we have\\
1) Suppose that $4l \geq m+4$ if $m\not\equiv 4 \pmod{8}$, and $4l\geq m+8$ if $m\equiv 4 \pmod{8}$. Then there are orthogonal unit vectors $v,w$ in $\Hyp^l$ such that $f(v)=f(w)$. \\
2) Suppose that 
$$l \geq \begin{cases} \frac{1}{2}(\floor{\frac{m}{2}}+1)(p-1)+1 & \mbox{if } p\mid \floor{\frac{m}{2}} +1 \\
              \frac{1}{2}(\floor{\frac{m}{2}}+2)(p-1)+1 & \mbox{if } p\nmid \floor{\frac{m}{2}} +1 \end{cases} $$ 
for an odd prime $p$. Then, there are orthogonal unit vectors $v_1,\cdots,  v_p$ in $\Hyp^l$ such that $f(v_1)=f(v_2)=\cdots = f(v_p)$. \\
3) Suppose that $l\geq \frac{1}{2}((\frac{m}{2}+2)(p^k-1)  +1)$ for an odd prime $p$. Then, there are $p^k$ orthogonal vectors $v_1,\cdots, v_{p^k}$ in $\Hyp^l$ such that $f(v_1)=f(v_2) =\cdots = f(v_{p^k})$.
\end{thm}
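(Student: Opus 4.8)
The plan is to follow, almost verbatim, the template already used for Theorem \ref{Kakp}, Corollary \ref{Kakp2} and Theorem \ref{Kakpn}, replacing the input Theorem \ref{indV} by the relevant index computations of Theorems \ref{indW}, \ref{indX}, \ref{indWXn}, \ref{comp-2} and \ref{indc2}. For each of the six assertions, starting from $f\colon S(\K^l)\to\R^m$ with $\K=\C$ (part A) or $\K=\Hyp$ (part B), I would form the $\Sigma_n$-equivariant map $F\colon St_n(\K^l)\to(\R^m)^n$, $F(v_1,\dots,v_n)=(f(v_1),\dots,f(v_n))$, where $n=p$ in the assertions (1),(2) and $n=p^k$ in (3), and $\Sigma_n$ permutes the $\K$-orthonormal frame on the source and the $n$ blocks on the target. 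Splitting $(\R^m)^n\cong W^m\oplus\ep^m$ over $\Sigma_n$ ($W$ the standard representation, $\ep^m$ the diagonal) and composing $F$ with the projection to $W^m$ produces $\tilde F$ with $\tilde F(v_1,\dots,v_n)=0$ exactly where $f(v_1)=\dots=f(v_n)$, as in Proposition \ref{stdred}. So it suffices to prove $\tilde F$ vanishes somewhere; otherwise the deformation retraction $W^m\ssm 0\to S(W^m)$ yields a $\Sigma_n$-map $St_n(\K^l)\to S(W^m)$, which I restrict to $C_p$ (a $p$-cycle, when $n=p$) or $C_p^k$ (acting on itself by left translation, when $n=p^k$). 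Since $W$ restricts to the reduced regular representation $\bar{\rho}$ of that subgroup $G$, this gives a $G$-map $St_n(\K^l)\to S(\bar{\rho}^m)$, whence $\Index_G(S(\bar{\rho}^m))\subseteq\Index_G(St_n(\K^l))$ by monotonicity, and the contradiction comes from comparing the two sides.

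For the prime cases A.2 and B.2 we have $G=C_p$, and $\bar{\rho}^m$ is fixed-point free of even real dimension $m(p-1)$, so $\Index_{C_p}(S(\bar{\rho}^m))=\langle v^{m(p-1)/2}\rangle$. For $\K=\C$, Theorem \ref{indW} gives $\Index_{C_p}(W_{l,p})=\langle v^{(\ceil{l/(p-1)}-1)(p-1)}\rangle$, and the containment above fails precisely when $\ceil{l/(p-1)}-1>m/2$, which is exactly the hypothesis $l\ge(\floor{m/2}+1)(p-1)+1$; this is word for word the argument of Corollary \ref{Kakp2}. For $\K=\Hyp$, Theorem \ref{indX} gives $\Index_{C_p}(X_{l,p})=\langle v^{m'(p-1)}\rangle$ with $m'=\ceil{2l/(p-1)}-1$ when $p\mid\ceil{2l/(p-1)}-1$ and $m'=\ceil{2l/(p-1)}-2$ otherwise, and I would check that the two stated bounds on $l$ (split according to whether $p\mid\floor{m/2}+1$) are exactly what force $m'>m/2$, so that the containment again fails.

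For the prime-power cases A.3 and B.3 we have $G=C_p^k$. Since $S(\bar{\rho}^m)$ is fixed-point free and $(m(p^k-1)-2)$-connected, its only possible non-trivial differential over $BC_p^k$ is the transgression $d_{m(p^k-1)}$, which Proposition \ref{1P} forces to be non-zero; hence $\Index_{C_p^k}(S(\bar{\rho}^m))$ contains a non-zero class in degree $m(p^k-1)$ and nothing in lower degree. On the other side, Theorem \ref{indWXn} says $\Index_{C_p^k}(St_{p^k}(\K^l))$ is generated in degrees that are multiples of $2(p-1)$, while the connectivity of the fibre (lowest cohomology class in degree $2(l-p^k+1)-1$ for $\K=\C$ and $4(l-p^k+1)-1$ for $\K=\Hyp$) shows that its lowest non-zero class sits in degree $\ge 2(p-1)\ceil{(l-p^k+1)/(p-1)}$, resp.\ $\ge 2(p-1)\ceil{2(l-p^k+1)/(p-1)}$. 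A floor/ceiling estimate identical to the one in the proof of Theorem \ref{Kakpn}, with the same split on the parity of $mk$, then shows that the hypotheses $l\ge(\tfrac m2+1)(p^k-1)+1$ (complex) and $l\ge\tfrac12((\tfrac m2+2)(p^k-1)+1)$ (quaternionic) make these lower bounds strictly exceed $m(p^k-1)$, so the containment $\Index_{C_p^k}(S(\bar{\rho}^m))\subseteq\Index_{C_p^k}(St_{p^k}(\K^l))$ is impossible.

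For the cases A.1 and B.1 we have $G=C_2$ and $W=\sigma$, so $S(W^m)=S(((m-1)+1)\sigma)$ and it suffices to exclude a $C_2$-map $St_2(\K^l)\to S(((m-1)+1)\sigma)$. Theorem \ref{comp-2} gives $\Index_{C_2}(W_{l,2})=\langle\mu^{2(l-1)}\rangle$ and $\Index_{C_2}(X_{l,2})=\langle\mu^{4(l-1)}\rangle$, and comparing with $\Index_{C_2}(S(((m-1)+1)\sigma))=\langle\mu^m\rangle$ already rules out the map once $2l\ge m+3$, resp.\ $4l\ge m+5$. In the one remaining boundary case $2l=m+2$, resp.\ $4l=m+4$, the congruence $m\not\equiv2\pmod{4}$, resp.\ $m\not\equiv4\pmod{8}$, forces $l$ to be odd, and then Theorem \ref{indc2} gives $\ind_{C_2}(W_{l,2})\ge 2l-2=m>m-1$, resp.\ $\ind_{C_2}(X_{l,2})\ge 4l-4=m>m-1$, which is incompatible with a $C_2$-map to $S(((m-1)+1)\sigma)$. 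Assembling the two sub-cases yields the stated bounds, and hence the $\K$-orthogonal vectors. Since all the topological input already lives in Section \ref{indcomp}, the only real work left is this bookkeeping; the two delicate points are aligning the two-branch index formula of Theorem \ref{indX} with the two-branch hypothesis on $l$ in B.2 (the extra ``$+1$'' relative to the complex case being exactly the shadow of the ``$-2$ versus $-1$'' in Theorem \ref{indX}), and verifying in A.1 and B.1 that the congruence condition on $m$ is precisely what guarantees $l$ odd so that the Steenrod-square improvement of Theorem \ref{indc2} is available.
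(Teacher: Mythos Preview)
Your proposal is correct and follows essentially the same approach as the paper's proof: the same reduction to ruling out a $G$-map $St_n(\K^l)\to S(\bar\rho^m)$ via $\tilde F$, the same index comparisons drawing on Theorems \ref{indW}, \ref{indX}, \ref{indWXn}, \ref{comp-2}, \ref{indc2}, and the same degree estimate from Theorem \ref{Kakpn} for the prime-power parts. The only cosmetic difference is in A1/B1, where the paper invokes Proposition \ref{conn} (connectivity of $W_{l,2}$, $X_{l,2}$) for the non-boundary range while you use the Fadell--Husseini computation of Theorem \ref{comp-2}; these give the same bound $2l\ge m+3$ (resp.\ $4l\ge m+5$) and the boundary case is then handled identically via Theorem \ref{indc2}.
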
 

\begin{proof}
We proceed as in the real case. From $f:S^{dl-1}\to \R^m$ for $d=2$ or $4$, we may construct $F: W_{l,n} \to (\R^m)^n$ in the case $d=2$ and $F: X_{l,n}\to (\R^m)^n$ in the case $d=4$ by the formula 
$$F(v_1,\cdots, v_n)= (f(v_1),\cdots,f(v_n)).$$
The map $F$ is $\Sigma_n$-equivariant, and $(\R^m)^n$ splits as a $\Sigma_n$-representation into $W^m\oplus \ep^m$ where $W$ is the standard representation of $\Sigma_n$. We define $\tilde{F}$ to be the projection of $F$ to $W^m$, and note that $\tilde{F}(v_1,\cdots, v_n)=0$ if and only if $f(v_1)=\cdots = f(v_n)$ as in Proposition \ref{stdred}. If there is no such tuple of orthogonal vectors, we may normalize the values of $\tilde{F}$ to obtain a $\Sigma_n$-equivariant map $W_{l,n} \to S(W^m)$ if $d=1$, or a $\Sigma_n$-equivariant map $X_{l,n}\to S(W^m)$. Under the given conditions on $l$ and $m$ we rule out the existence of such maps by restrictions to subgroups $C_p$ or $C_p^n$ as in the real case.

The statements A1 and B1 follow from the bound on the $C_2$-index in Theorem \ref{indc2}. It implies that there is no $C_2$ map $W_{l,2} \to S((2l-2)\sigma)$ and there is no $C_2$-map $X_{l,2} \to S((4l-4)\sigma)$ if $l$ is odd. This implies A1 in the case $4 \mid m$ and B1 in the case $8 \mid m$. The remaining statements in A1 follow from the fact that $W_{l,2}$ is $(2l-4)$-connected and $S(m\sigma)$ is $(m-2)$-connected, so that Proposition \ref{conn} implies the conclusions in the other cases (note that if $m$ is odd, $2l\geq m+2$ already implies $2l\geq m+3$). The remaining statements of B1 also follow from Propostion \ref{conn} using the fact that $X_{l,2}$ is $(4l-6)$-connected. 

The statement A2 follows in exactly the same manner as Theorem \ref{Kakp} in the $p$ odd case, as the index of the complex Stiefel manifold has the same formula as the real Stiefel manifolds at odd primes (Compare Theorems \ref{indV} and \ref{indW}). We may deduce B2 from the index computations in Theorem \ref{indX}  by observing that in the case $p\mid \floor{\frac{m}{2}}+1$, 
$$\ceil{\frac{2l}{p-1}}-1 \geq \ceil{\frac{(\floor{\frac{m}{2}}+1)(p-1)+2}{p-1}}-1=\floor{\frac{m}{2}}+1 > \frac{m}{2},$$
and  in the case $p\nmid \floor{\frac{m}{2}}+1$, 
$$\ceil{\frac{2l}{p-1}}-2 \geq \ceil{\frac{(\floor{\frac{m}{2}}+2)(p-1)+2}{p-1}}-2=\floor{\frac{m}{2}}+1 > \frac{m}{2},$$
and then complete the proof using the  index computations for $S(\bar{\rho}^m)$ in Theorem \ref{Kakp}.

With respect to the statement A3, we compare the index computation for the complex Stiefel manifold in Theorem \ref{indWXn} with the computation for the real Stiefel manifold in Theorem \ref{indV1}. We proceed as in Theorem \ref{Kakpn}, to observe that the first non-trivial differential in the Serre spectral sequence associated to the fibration 
$$W_{l,p^k} \to (W_{l,p^k})_{hC_p^k} \to BC_p^k,$$  
has image in degree $\geq 2(l-p^k+1)$ and divisible by $2(p-1)$. In this case the result follows in an analogous manner as in Theorem \ref{Kakpn}. 

Observe now that for the fibration 
$$X_{l,p^k} \to (X_{l,p^k})_{hC_p^k} \to BC_p^k,$$
Theorem \ref{indWXn} implies that the image of the first non-trivial differential is in degree which is both $\geq 4(l-p^k+1)$ and divisible by $2(p-1)$.  This implies that the lowest degree terms are in degree at least $2(\ceil{\frac{2(l-p^k+1)}{p-1}})(p-1)$. The computation at the end of the proof of Theorem \ref{Kakpn} implies that under the given condition this is $>\frac{m}{2}(p^k-1)$. This implies that $\Index_{C_p^k}S(W^m) \not\subset \Index_{C_p^k}X_{l,p^k}$ and thus, completes the proof of B3. 
\end{proof}

\end{document}